
\RequirePackage{tikz}
\documentclass[sn-mathphys]{sn-jnl-1}


\usepackage{verbatim}
\usepackage{mathtools}

\newcommand{\La}{\mathcal{L}}
\newcommand{\Zeta}{\mathcal{Z}}

\newcommand{\E}{\mathcal{E}}
\newcommand{\V}{\mathcal{V}}
\newcommand{\cL}{\mathbf{L}}
\newcommand{\De}{\mathbf{\Delta}}
\newcommand{\mR}{\mathbf{R}}

\newcommand{\ud}{\mathrm{d}}
\renewcommand\Re{\operatorname{Re}}
\newcommand{\G}{\mathit{\Gamma}}



\jyear{2021}%

\theoremstyle{thmstyleone}%
\newtheorem{theorem}{Theorem}
\newtheorem{corollary}{Corollary}%
\newtheorem{lemma}{Lemma}
\newtheorem{proposition}{Proposition}

\theoremstyle{thmstyletwo}%

\theoremstyle{thmstylethree}%

\raggedbottom

\begin{document}

\title[Can One Hear the Spanning Trees of a Quantum Graph?]{Can One Hear the Spanning Trees of a Quantum Graph?}


\author[1]{\fnm{Jonathan} \sur{Harrison}}\email{jon\_harrison@baylor.edu}

\author*[2]{\fnm{Tracy} \sur{Weyand}}\email{weyand@rose-hulman.edu}



\affil[1]{\orgdiv{Department of Mathematics}, \orgname{Baylor University}, \orgaddress{\street{1410 S. 4th Street}, \city{Waco}, \postcode{76706}, \state{TX}, \country{USA}}} 

\affil*[2]{\orgdiv{Department of Mathematics}, \orgname{Rose-Hulman Institute of Technology}, \orgaddress{\street{5500 Wabash Avenue}, \city{Terre Haute}, \postcode{47803}, \state{IN}, \country{USA}}}  



\abstract{Kirchhoff showed that the number of spanning trees of a graph is the spectral determinant of the combinatorial Laplacian divided by the number of vertices; we reframe this result in the quantum graph setting.   We prove that the spectral determinant of the Laplace operator on a finite connected metric graph with standard (Neummann-Kirchhoff) vertex conditions determines the number of spanning trees when the lengths of the edges of the metric graph are sufficiently close together.  To obtain this result, we analyze an equilateral quantum graph whose spectrum is closely related to spectra of discrete graph operators and then use the continuity of the spectral determinant under perturbations of the edge lengths.}



\keywords{quantum graphs, spectral determinant, zeta functions, spanning trees}



\pacs[MSC Classification]{81Q10, 81Q35, 05C05, 34B45}

\maketitle

\section{Introduction}\label{sec1}

The question ``Can one hear the shape of a quantum graph?'' was answered in the affirmative by Gutkin and Smilansky \cite{GS01} for a quantum graph where the set of edge lengths are incommensurate.  Their work was inspired by the famous question of Kac \cite{Kac66} which also sowed the seeds of results on isospectral billiards \cite{Cha95, GWW92}, graphs \cite{BPB09} and Riemannian manifolds \cite{Sun85}, along with many other related works.  The connection between these questions is the extent to which properties of the spectrum can be used to recover information on the system's geometry. 

Isospectral domains were also studied in the context of discrete graphs \cite{Bro99}.  In graph theory, a historic spectral geometric connection is provided by a theorem of Kirchhoff \cite{Kirchhoff}.  Kirchhoff's matrix tree theorem expresses the number of spanning trees of a connected graph in terms of the spectral determinant of the combinatorial Laplacian of the graph which is the matrix $\cL =\mathbf{D}-\mathbf{A}$ where $\mathbf{D}$ is a diagonal matrix of the vertex degrees and $\mathbf{A}$ is the adjacency matrix, see section \ref{sec:background}.

\begin{theorem}[Kirchhoff's Matrix Tree Theorem]\label{lem:matrix tree theorem}
	For a connected graph $G$ with $V$ vertices,
	\begin{equation}
	\# \mbox{ spanning trees } = \dfrac{1}{V}{\det}'(\cL) = \det(\cL[i])
	\end{equation}
	for any $i = 1, 2, \ldots, V$ where $\cL[i]$ is the matrix $\cL$ with row $i$ and column $i$ removed.
\end{theorem}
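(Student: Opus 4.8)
The plan is to prove the two equalities separately, isolating the combinatorial content in the identity $\det(\cL[i]) = \#\text{ spanning trees}$ and treating $\tfrac{1}{V}{\det}'(\cL) = \det(\cL[i])$ as a purely linear-algebraic statement. First I would factor the Laplacian: after fixing an arbitrary orientation of each edge, introduce the signed incidence matrix $B$ of size $V\times E$, whose $(v,e)$ entry is $+1$ if $v$ is the head of $e$, $-1$ if $v$ is the tail, and $0$ otherwise. A direct computation shows $\cL = B B^{\mathsf T}$, independent of the chosen orientation, since the diagonal recovers the degrees and the off-diagonal entries count oriented edges between adjacent vertices with a cancelling sign. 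Deleting row $i$ to form the $(V-1)\times E$ matrix $B^{(i)}$ then yields $\cL[i] = B^{(i)}\bigl(B^{(i)}\bigr)^{\mathsf T}$.

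Next I would apply the Cauchy--Binet formula to this factorization, giving $\det(\cL[i]) = \sum_{S} \bigl(\det B^{(i)}[S]\bigr)^{2}$, where the sum runs over all $(V-1)$-element subsets $S$ of the edge set (nonempty since $E \ge V-1$ for a connected graph) and $B^{(i)}[S]$ is the square submatrix on the columns indexed by $S$. The decisive step, and the one I expect to be the main obstacle, is the combinatorial lemma that $\det B^{(i)}[S] \in \{-1,0,+1\}$, equalling $\pm 1$ exactly when the edges of $S$ form a spanning tree and $0$ otherwise. The vanishing direction is easy: if $S$ contains a cycle, the signed incidence columns around that cycle sum to zero, so the submatrix is singular. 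For the spanning-tree case I would induct on $V$, repeatedly peeling off a leaf edge and expanding the determinant along the corresponding row; each step contributes a factor $\pm 1$ and reduces to the incidence matrix of a smaller tree. Substituting back into Cauchy--Binet, every spanning tree contributes exactly $1$ and everything else contributes $0$, so $\det(\cL[i]) = \#\text{ spanning trees}$; in particular this count is independent of the deleted index $i$.

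Finally, for the reduced-to-full determinant relation I would use that $\cL$ is symmetric and positive semidefinite with the all-ones vector spanning its kernel, connectivity guaranteeing that $0$ is a simple eigenvalue, so that ${\det}'(\cL)$ is the product of the $V-1$ strictly positive eigenvalues. Examining the characteristic polynomial $\det(\lambda I - \cL) = \lambda\prod_{k=2}^{V}(\lambda-\lambda_k)$, its coefficient of $\lambda$ equals $(-1)^{V-1}{\det}'(\cL)$ by reading off the constant term of the product over the nonzero eigenvalues, and also equals $(-1)^{V-1}\sum_{i}\det(\cL[i])$ as the signed sum of the principal $(V-1)\times(V-1)$ minors of $\cL$. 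Since all the diagonal cofactors $\det(\cL[i])$ coincide by the previous paragraph, their sum is $V\det(\cL[i])$, and comparing the two expressions for this coefficient gives ${\det}'(\cL) = V\det(\cL[i])$, which completes the proof.
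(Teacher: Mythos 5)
Your proof is correct, but there is no proof in the paper to compare it against: the paper states Kirchhoff's matrix tree theorem as classical background, citing \cite{Kirchhoff}, and never proves it---the theorem is only invoked later, in the proof of Lemma~\ref{thm:discretelemma}, to identify $\det(\cL[i])$ with the number of spanning trees. Your argument is the standard textbook proof: the factorization $\cL = BB^{\mathsf{T}}$ through a signed incidence matrix, Cauchy--Binet applied to $\cL[i] = B^{(i)}\bigl(B^{(i)}\bigr)^{\mathsf{T}}$, the key lemma that the maximal minors of $B^{(i)}$ are $\pm 1$ exactly on spanning trees and $0$ otherwise, and finally the comparison of the coefficient of $\lambda$ in the characteristic polynomial, read once as $(-1)^{V-1}{\det}'(\cL)$ from the eigenvalue factorization and once as $(-1)^{V-1}\sum_i \det(\cL[i])$ from the sum of principal minors. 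All steps are sound. One detail worth making explicit in the leaf-peeling induction: the leaf you expand along must be a vertex other than the deleted vertex $i$, so that its row is actually present in $B^{(i)}[S]$; since every tree on at least two vertices has at least two leaves, such a choice always exists. It is also worth noting that your final coefficient-comparison step is essentially the same device the paper uses in its proof of Lemma~\ref{thm:discretelemma}, where the linear coefficient of the characteristic polynomial of $\De$ is computed in two ways (once as $-{\det}'(\De)$, once by expanding $\det(\cL - x\mathbf{D})$ row by row); your proposal can thus be read as the $\mathbf{D}=\mathbf{I}$ analogue of that argument, supplemented by the incidence-matrix combinatorics that the paper outsources to Kirchhoff.
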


In this article we reframe Kirchhoff's theorem in the setting of quantum graphs.  Quantum graphs were introduced to model electrons in organic molecules \cite{Pau36} and are widely employed in mathematical physics as a model of quantum mechanics in systems with complex geometry, see \cite{BKbook, Kuc03} for a review. 
In a quantum graph the edges of the graph consist of intervals connected at the vertices with a self-adjoint differential operator on the set of intervals.  Here we consider the most widely studied case of the Laplace operator with Neumann-Kirchhoff (or standard) vertex conditions, denoted $\La$.  For Neumann-Kirchhoff conditions, functions on the graph are continuous and outgoing derivatives at the  vertices sum to zero.  We obtain the following relationship between the spectral determinant of $\La$ and the number of spanning trees of the graph.

\begin{theorem}\label{thm:generic spanning trees}
	Let $\G$ be a connected metric graph with edge lengths in the interval $[\ell,\ell+\delta)$  and $\La$ the Laplacian with Neumann-Kirchoff vertex conditions.  If
	\begin{equation}
	\delta< \frac{\ell}{V^V \, 2^{E+V}\sqrt{2EV}}  \ ,
	\end{equation}
	then the number of spanning trees is the closest integer to
	\begin{equation}\label{eq:T thm}
	T_{\G}=  \dfrac{\prod_{v\in\V} d_v}{E \, 2^E \,\ell^{\beta+1}} {\det}'(\La) \ ,
	\end{equation}
	where $\V$ is the set of vertices, $d_v$ is the degree of vertex $v$, $E$ is the number of edges and $\beta$ is the first Betti number of $\G$.
\end{theorem}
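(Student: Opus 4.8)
The plan is to prove the identity first on the \emph{equilateral} graph, where every edge has length exactly $\ell$, and then to transfer it to nearby length vectors by a quantitative continuity argument. I expect that on the equilateral graph $T_{\G}$ equals the number of spanning trees \emph{exactly}, so that the ``closest integer'' in the statement only does work once the lengths are spread out over $[\ell,\ell+\delta)$.

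First I would write the spectral determinant through zeta regularisation, ${\det}'(\La)=\exp(-\Zeta_{\La}'(0))$ with $\Zeta_{\La}(s)=\sum_{\lambda_n>0}\lambda_n^{-s}$ summing over the nonzero eigenvalues $\lambda_n=k_n^2$, where the $k_n$ are the positive zeros of a secular function $\Phi(k)$ built from the Neumann-Kirchhoff conditions. On the equilateral graph this factorises. Solving $-f''=k^2f$ on each edge and imposing continuity together with the vanishing of the summed outgoing derivatives reduces everything to the vertex values: away from the Dirichlet values $k\ell\in\pi\N$ one obtains $\big(\mathbf{A}-\cos(k\ell)\mathbf{D}\big)\vec f=0$, so that $\cos(k\ell)$ must be an eigenvalue of the transition matrix $\mathbf{D}^{-1}\mathbf{A}$ (equivalently $1-\cos(k\ell)$ a normalised-Laplacian eigenvalue), while the Dirichlet values enter with multiplicity governed by $\beta=E-V+1$. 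Up to a nonvanishing entire factor one may therefore take $\Phi(k)=\sin^{E-V}(k\ell)\,\det\!\big(\cos(k\ell)\mathbf{D}-\mathbf{A}\big)$, which is entire after cancellation.

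Next I would extract ${\det}'(\La)$ from $\Phi$. Feeding its zeros into the contour representation of $\Zeta_{\La}(s)$ and continuing to $s=0$, the value of the determinant is fixed by two pieces of data: the high-energy amplitude of $\Phi$, which is purely combinatorial and equals $\prod_{v}d_v/2^{E}$, and the leading small-$k$ coefficient. Expanding $\cos(k\ell)\mathbf{D}-\mathbf{A}=\cL-\tfrac12(k\ell)^2\mathbf{D}+O(k^4)$ and using that $\cL$ has a simple kernel spanned by the constant vector gives $\det\!\big(\cos(k\ell)\mathbf{D}-\mathbf{A}\big)\sim-\tfrac{E}{V}(k\ell)^2\,{\det}'(\cL)$, hence $\Phi(k)\sim-\tfrac{1}{V}E\,\ell^{\beta+1}\,{\det}'(\cL)\,k^{\beta+1}$. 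Kirchhoff's theorem (Theorem~\ref{lem:matrix tree theorem}) turns ${\det}'(\cL)=V\cdot(\#\mbox{ spanning trees})$ into the spanning-tree count, and since the regularisation divides out the amplitude, the factors assemble into ${\det}'(\La)=\big(E\,2^{E}\,\ell^{\beta+1}/\prod_v d_v\big)(\#\mbox{ spanning trees})$, i.e. $T_{\G}=\#\mbox{ spanning trees}$ on the equilateral graph (one checks this against the interval, ${\det}'(\La)=2\ell$, and the loop, ${\det}'(\La)=\ell^{2}$).

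Finally I would perturb the lengths. The eigenvalues, and hence ${\det}'(\La)$ and $T_{\G}$, depend real-analytically on the length vector $(\ell_e)$, so it suffices to bound the variation of $T_{\G}$ as the lengths range over $[\ell,\ell+\delta)^{E}$ and to show it stays below $\tfrac12$; the closest integer is then still the (length-independent) spanning-tree count. Differentiating the spectral determinant in each $\ell_e$ through the variation formula for the eigenvalues, estimating uniformly over the box of diameter $\le\delta\sqrt{E}$, and invoking an a~priori bound on the number of spanning trees (for instance via Theorem~\ref{lem:matrix tree theorem} and Hadamard's inequality, $\#\mbox{ spanning trees}\le\tfrac1V\big(\tfrac{2E}{V-1}\big)^{V-1}$), reduces the requirement to an elementary inequality of the type $(\#\mbox{ spanning trees})\big[(1+\delta/\ell)^{\beta+1}-1\big]<\tfrac12$, and the stated hypothesis $\delta<\ell/\big(V^{V}2^{E+V}\sqrt{2EV}\big)$ is exactly what makes this hold. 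The hard part will be this last, quantitative step: one must control the \emph{full} variation of a zeta-regularised determinant, not merely its leading behaviour, uniformly over the perturbation box, and then convert that control into the explicit rounding margin, which is where the combinatorial constant and the spanning-tree bound enter.
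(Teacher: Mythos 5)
Your equilateral step is essentially sound, and it reaches Theorem \ref{thm:equilateral spanning trees} by a genuinely different route from the paper: you extract ${\det}'(\widetilde{\La})$ from a secular function (small-$k$ coefficient divided by high-energy amplitude), whereas the paper computes $\Zeta'(0)$ via Hurwitz zeta functions (proposition \ref{theorem: spectral determinant}) and combines it with a Chung--Yau-type identity (lemma \ref{thm:discretelemma}). The genuine gap is in the perturbation step, which is the actual content of Theorem \ref{thm:generic spanning trees}, and you flag it yourself as ``the hard part'' without supplying a mechanism. Differentiating a zeta-regularised determinant ``through the variation formula for the eigenvalues'' is not an available operation: ${\det}'(\La)$ is defined by analytic continuation of $\Zeta(s)$, and eigenvalue-by-eigenvalue variation formulas cannot be summed termwise with uniform control; moreover, your secular representation was derived only at the equilateral point, where the von Below correspondence applies, so you have no closed expression for ${\det}'(\La)$ once the lengths are unequal. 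The missing idea is precisely Friedlander's formula (theorem \ref{thm:Friedlander} in the paper), valid for arbitrary edge lengths, which writes ${\det}'(\La)$ in terms of the finite $V\times V$ weighted Laplacian $\mR$ with weights $\ell_e^{-1}$. With it the comparison becomes finite-dimensional perturbation theory: the paper bounds $\lvert\lvert \widetilde{\mR}-\mR \rvert\rvert_2 < \delta\sqrt{2EV}/\ell^2$, transfers this to eigenvalues by Weyl's inequality, bounds the resulting change in the product of nonzero eigenvalues, and controls the spectral gap from below via Mohar's bound $\mu_2\geq 4/(DV)$ and ${\det}'(\cL)$ from above by $V^{V-1}$; the hypothesis on $\delta$ is tailored to exactly these constants, not to the inequality you write down.

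A second, related flaw: your claimed reduction of the whole problem to
\begin{equation}
\left(\# \mbox{ spanning trees }\right)\left[\left(1+\delta/\ell\right)^{\beta+1}-1\right]<\tfrac12
\end{equation}
does not follow from your sketch. Uniform scaling of all lengths by $(1+t)$ does multiply $T_{\G}$ by $(1+t)^{\beta+1}$, but non-uniform perturbations additionally move the spectrum of the weighted Laplacian, and nothing in your argument shows this contribution is absorbed; in the paper's proof it is in fact the dominant error term (the one carrying $2^{V-1}\sqrt{2EV}/\tilde{\lambda}_2$). Curiously, the inequality itself \emph{is} true and would give a cleaner and sharper proof than the paper's: combining theorem \ref{thm:Friedlander} with the weighted matrix-tree theorem, ${\det}'(\mR)=V\sum_{T}\prod_{e\in T}\ell_e^{-1}$ over spanning trees $T$, gives
\begin{equation}
T_{\G}=\frac{\ell_{\mathrm{tot}}}{E\,\ell^{\beta+1}}\sum_{T}\prod_{e\notin T}\ell_e \ ,
\end{equation}
a sum of $\#\mbox{ spanning trees}$ terms, each a product of $\beta+1$ factors lying in $[\ell,\ell+\delta)$, whence $T_{\G}$ lies between $\#\mbox{ spanning trees}$ and $\#\mbox{ spanning trees}\cdot(1+\delta/\ell)^{\beta+1}$, and Cayley's bound $\#\mbox{ spanning trees}\leq V^{V-2}$ closes the estimate under the stated hypothesis. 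But this argument, like the paper's, rests on Friedlander's formula, which your proposal never invokes; as written, the quantitative step of your proof is missing.
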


If we have an equilateral quantum graph $\widetilde{\G}$ where all the edge lengths are $\ell$, then $T_{\widetilde{\G}}$ is precisely the number of spanning trees. 
We use $ \widetilde{\La}$ to denote the Laplace operator of an equilateral quantum graph. 

\begin{theorem}\label{thm:equilateral spanning trees}
	Let $ \widetilde{\G}$ be a connected equilateral metric graph with edge length $\ell$ and $\widetilde{\La}$ the Laplacian with Neumann-Kirchoff vertex conditions.  Then,
	\begin{equation}\label{eq:equilateral spanning trees}
	\# \mbox{ spanning trees } =\dfrac{\prod_{v\in\V} d_v}{E \, 2^E \,\ell^{\beta+1}} {\det}'( \widetilde{\La}),
	\end{equation}
	where $\V$ is the set of vertices, $d_v$ is the degree of vertex $v$, $E$ is the number of edges and $\beta$ is the first Betti number of $\widetilde{\G}$.
\end{theorem}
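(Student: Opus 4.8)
The plan is to compute the zeta-regularized spectral determinant of $\widetilde{\La}$ by relating it to the combinatorial Laplacian $\cL=\mathbf{D}-\mathbf{A}$ and then invoking Kirchhoff's theorem (Theorem~\ref{lem:matrix tree theorem}). The convenient object is the resolvent-type determinant $\det(\widetilde{\La}+\gamma^2)$ as a function of the spectral parameter $\gamma>0$, from which ${\det}'(\widetilde{\La})=\lim_{\gamma\to0}\gamma^{-2}\det(\widetilde{\La}+\gamma^2)$, since the graph is connected and $\lambda=0$ is a simple eigenvalue.

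First I would establish a gluing (factorization) formula separating edge and vertex contributions. Cutting the graph at every vertex decouples it into $E$ Dirichlet intervals of length $\ell$; each contributes the classical interval determinant $\det(-\partial_x^2+\gamma^2)_{\mathrm{Dir}}=2\sinh(\gamma\ell)/\gamma$, giving a factor $\big(2\sinh(\gamma\ell)/\gamma\big)^E$. The matching (Dirichlet-to-Neumann) data at the vertices assembles into the $V\times V$ matrix $\mathcal M(\gamma)=-\gamma\coth(\gamma\ell)\,\mathbf{D}+\gamma\,\operatorname{csch}(\gamma\ell)\,\mathbf{A}=-\gamma\,\operatorname{csch}(\gamma\ell)\big(\cosh(\gamma\ell)\mathbf{D}-\mathbf{A}\big)$, whose vanishing reproduces the secular equation $\cosh(\gamma\ell)\mathbf{D}\phi=\mathbf{A}\phi$, the Wick-rotated form of the equilateral spectral correspondence $\cos(k\ell)\in\operatorname{spec}(\mathbf{D}^{-1}\mathbf{A})$. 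With $\det(\widetilde{\La}+\gamma^2)=\kappa\,\big(2\sinh(\gamma\ell)/\gamma\big)^E\det\mathcal M(\gamma)$ for a $\gamma$-independent gluing constant $\kappa$, and $\beta=E-V+1$, this reduces to
\begin{equation*}
\det(\widetilde{\La}+\gamma^2)=\kappa\,(-1)^V\,2^E\,\gamma^{\,V-E}\,\sinh(\gamma\ell)^{\beta-1}\,\det\big(\cosh(\gamma\ell)\mathbf{D}-\mathbf{A}\big).
\end{equation*}

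Next I would take $\gamma\to0$. Writing $\cosh(\gamma\ell)\mathbf{D}-\mathbf{A}=\cL+\tfrac{(\gamma\ell)^2}{2}\mathbf{D}+O(\gamma^4)$ and applying first-order perturbation theory about the simple kernel of $\cL$ (spanned by the constant vector, with $\langle\mathbf 1,\mathbf{D}\mathbf 1\rangle=\sum_v d_v=2E$ and normalisation $\langle\mathbf 1,\mathbf 1\rangle=V$), one gets $\det\big(\cosh(\gamma\ell)\mathbf{D}-\mathbf{A}\big)=\gamma^2\ell^2\,\tfrac{E}{V}\,{\det}'(\cL)+O(\gamma^4)$. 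The $\gamma$-powers combine to $\gamma^{V-E}\cdot\gamma^{\beta-1}\cdot\gamma^2=\gamma^2$, confirming the expected simple zero and yielding ${\det}'(\widetilde{\La})=\kappa\,(-1)^V\,2^E\,\tfrac{E}{V}\,\ell^{\beta+1}\,{\det}'(\cL)$. Kirchhoff's theorem replaces ${\det}'(\cL)$ by $V$ times the number of spanning trees, so the claimed identity \eqref{eq:equilateral spanning trees} follows precisely once $\kappa=(-1)^V/\prod_{v}d_v$.

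I expect the main obstacle to be pinning down this constant $\kappa$ rigorously --- equivalently, controlling the zeta-regularization so that exactly the factor $\prod_{v}d_v$ (and the sign) emerges. In a direct zeta-function treatment this is the step where one reads off $\zeta'(0)$ from the large-$\gamma$ asymptotics of $\log\det\mathcal M(\gamma)$ together with the value of the secular function at $\gamma=0$, and it is the local vertex normalisations scaling each matching block that produce $\prod_{v}d_v$. A secondary technical point is the multiplicity bookkeeping at the special values $k\ell\in\pi\mathbb{Z}$ --- the Dirichlet and cycle modes responsible for the $\sinh(\gamma\ell)^{\beta-1}$ factor --- which must be checked against the factorization above; the dimension count of the cycle space ($\beta$ at even multiples, with the bipartite correction at odd multiples) supplies exactly these orders.
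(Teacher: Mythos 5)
Your route is genuinely different from the paper's: the paper proves proposition \ref{theorem: spectral determinant}, ${\det}'(\widetilde{\La}) = 2^{E-1}\ell^{\beta+1}{\det}'(\De)$, by an explicit Hurwitz-zeta computation (lemma \ref{lemma: zeta function}, taken from \cite{HarWey18}), and then converts ${\det}'(\De)$ into a spanning-tree count through a Chung--Yau-type expansion of $\det(\cL - x\mathbf{D})$ (lemma \ref{thm:discretelemma}) combined with theorem \ref{lem:matrix tree theorem}. What you propose is instead a secular-determinant/gluing argument, i.e.\ essentially a re-derivation of the equilateral case of Friedlander's formula (theorem \ref{thm:Friedlander}, which the paper only invokes later for non-equilateral graphs). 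Your small-$\gamma$ analysis is correct as far as it goes: the expansion $\cosh(\gamma\ell)\mathbf{D}-\mathbf{A} = \cL + \tfrac{(\gamma\ell)^2}{2}\mathbf{D}+O(\gamma^4)$, the first-order value $\langle\mathbf 1,\mathbf{D}\mathbf 1\rangle/\langle\mathbf 1,\mathbf 1\rangle = 2E/V$, and the power counting leaving exactly $\gamma^2$ all check out, and your reduction of the theorem to the single statement $\kappa=(-1)^V/\prod_v d_v$ is accurate.

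The genuine gap is that $\kappa$ is never determined, and that is exactly where the content of the theorem lives. The factor $\prod_{v\in\V} d_v$ is what distinguishes the quantum-graph matrix-tree theorem from Kirchhoff's; with $\kappa$ unknown your argument shows only that ${\det}'(\widetilde{\La})$ is proportional to $E\,2^E\ell^{\beta+1}\times\#\{\mbox{spanning trees}\}$ with an undetermined, a priori graph-dependent constant, which is not the statement --- and inferring $\kappa$ from the desired identity would be circular. Moreover, the factorization you start from, namely that the \emph{zeta-regularized} determinant satisfies $\det(\widetilde{\La}+\gamma^2)=\kappa\left(2\sinh(\gamma\ell)/\gamma\right)^E\det\mathcal{M}(\gamma)$ with $\gamma$-independent $\kappa$, is itself a nontrivial theorem for functional (as opposed to finite-dimensional) determinants; it is essentially the content of \cite{Fri06} and \cite{HarKirTex12}, where it is established by matching the zeros of both sides with multiplicity (including the Dirichlet and cycle modes at $k\ell\in\pi\mathbb{Z}$, which you defer as ``bookkeeping''), controlling the order of growth so that Hadamard factorization forces the ratio to be constant, and then fixing that constant from the large-$\gamma$ asymptotics --- note $\mathcal{M}(\gamma)\to -\gamma\mathbf{D}$ as $\gamma\to\infty$, so $\det\mathcal{M}(\gamma)\sim(-\gamma)^V\prod_v d_v$, and it is precisely this asymptotic comparison that produces the degree product. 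Until you carry out those two steps (justify the factorization and compute $\kappa$ from the asymptotics), the proposal is a sound plan whose hardest part is missing rather than a proof.
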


In the formula for $T_\Gamma$ (\ref{eq:T thm}), the number of edges $E$ can be determined from the spectrum of $\La$ via the Weyl law, where the mean spacing of the eigenvalues is given by $\pi^{-1}\sum_{e\in \mathcal{E}} l_e$, see \cite{BKbook}, as the edge lengths are tightly constrained to $[\ell,\ell+\delta)$.  For a connected graph $\Gamma$, the first Betti number is determined by the number of edges and vertices, $\beta=E-V+1$.  Hence, it would be interesting to know if there are also spectral interpretations of the number of vertices and the product of the degrees in (\ref{eq:T thm}).   

The article is organised as follows.  In section \ref{sec:background} we introduce discrete graph and metric graph notation and operators. We relate the spectral determinants of equilateral quantum graphs and discrete graph operators in section \ref{sec:spec det equilateral}.  In section \ref{sec:eqtrees} we use Kirchoff's matrix tree theorem and the relationship between spectral determinants to prove theorem  \ref{thm:equilateral spanning trees}.  In section \ref{sec:det comparison} we compare spectral determinants of equilateral and non-equilateral quantum graphs and in section \ref{sec:general} we prove theorem \ref{thm:generic spanning trees}.  Finally we summarize the results in section \ref{sec:conclusion}.


\section{Background}\label{sec:background}

A \textit{discrete graph} $G$ is comprised of a set of vertices $\mathcal{V}$ and a set of edges $\mathcal{E}$ such that each edge connects a pair of vertices; see for example Figure \ref{fig: complete bipartite}.  Two vertices $u,v\in  \mathcal{V}$ are adjacent if $(u,v)\in \mathcal{E}$ and we write $u\sim v$.  
We will denote the number of vertices by $V = \lvert\mathcal{V}\rvert$ and the number of edges by $E = \lvert\mathcal{E}\rvert$. 
The \textit{degree} of a vertex $v$, denoted $d_v$,  is the number of vertices that are adjacent to $v$.  
We let $\mathcal{E}_v$ denote the set of edges containing the vertex $v$ so $d_v= \lvert \mathcal{E}_v \rvert$.  If $d_v=d$ for all $v\in \mathcal{V}$, the graph is \emph{regular}.
In this paper, we only consider simple graphs where there are no loops or multiple edges and the number of edges and vertices are finite.   
We also assume that $G$ is \textit{connected}, so there is a path between every pair of vertices. 
The \emph{distance} between a pair of vertices $u,v\in \mathcal{V}$ on a connected discrete graph is the number of edges in a shortest path joining $u$ and $v$.  
The \emph{diameter} $D$ of the graph is the maximum distance between a pair of vertices.
The \textit{first Betti number} of $G$ is $\beta = E-V+1$, the number of independent cycles on $G$.  A \textit{tree} is a connected graph with no cycles, $\beta = 0$. A \textit{spanning tree} of $G$ is a subgraph of $G$ that is a tree and contains all the vertices of $G$.  

\begin{figure}[tbh]
\begin{center}
\begin{tikzpicture}
  [scale=1.5,every node/.style={circle,fill=black!, scale=.5}]
  \node (n1) at (0,.5) {};
  \node (n2) at (0,-.5)  {};
  \node (n3) at (1,0.75)  {};
  \node (n4) at (1,0.25) {};
  \node (n5) at (1,-0.25) {};
  \node (n6) at (1, -0.75) {};

  \foreach \from/\to in {n1/n3,n1/n4,n1/n5, n1/n6, n2/n3,n2/n4,n2/n5, n2/n6}
    \draw (\from) -- (\to);

\end{tikzpicture}
\end{center}
\caption{\small{The complete bipartite graph $K_{2,4}$.}}\label{fig: complete bipartite}
\end{figure}
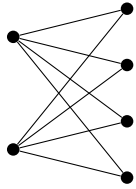 

A function on a discrete graph $G$ takes values at the vertices and therefore can be viewed as a vector $f\in \mathbb{C}^V$.   The \textit{combinatorial Laplace operator}, $\cL$, is defined by
\begin{equation}
(\cL f)(v) = d_v f(v) - \sum_{u\sim v} f(u).
\end{equation}
 This means that we can define the self-adjoint $V\times V$ matrix $\cL$ where
\begin{equation} \cL_{uv} = \begin{cases}
	d_u & \text{if } u=v\\
	-1 & \text{if } u\sim  v \\
	0 & \text{otherwise} 
	\end{cases} \ .
\end{equation}
We note that $\cL$ has zero as an eigenvalue (with multiplicity one for a connected graph), and we will index the eigenvalues of $\cL$ in increasing order,
\begin{equation}
0 = \mu_1 < \mu_2 \leq \mu_3 \leq \ldots \leq \mu_V.
\end{equation}
The harmonic Laplacian is 
\begin{equation}
\De = \mathbf{D}^{-1} \cL 
\end{equation}
where $\mathbf{D}$ is a diagonal matrix of the degrees, $\mathbf{D}_{uv} =d_v \delta_{uv}$.  We denote the eigenvalues of $\De$ with $\lambda_j$ for $j=1,\dots, V$.

A \textit{metric graph} $\G$ is a discrete graph where every edge $e\in \mathcal{E}$ is assigned a length $\ell_e>0$.   We associate the edge $e$ with the interval $[0,\ell_e]$. For $e=(u,v)$, we set $x_e=0$ at $u$ and $x_e=\ell_e$ at $v$; the choice of orientation of the coordinate is arbitrary and will not affect the results.   The \textit{total length} of a metric graph is the sum of the lengths of each edge,  $ \ell_{\mathrm{tot}} = \sum_{e\in\E} \ell_e$.    An \textit{equilateral metric graph} $ \widetilde{\G}$  is a metric graph where all the edge lengths are equal.  We will use the term \emph{generic metric graph} to distinguish a non-equilateral graph.  
A function $f$ on $\G$ is a collection of functions $\{f_e\}_{e\in\mathcal{E}}$  where $f_e$ is a function on the interval $[0,\ell_e]$.

A \textit{quantum graph} is a metric graph $\G$ along with a self-adjoint differential operator. In this paper, we consider the \textit{Laplace operator}, denoted by $\La$, which acts as $-\frac{\ud^2}{\ud x_e^2}$ on functions defined on $[0,\ell_e]$. We will use the notation $\widetilde{\La}$ if $ \widetilde{\G}$ is an equilateral metric graph. At the vertices, functions will satisfy the Neumann-Kirchhoff (or standard) vertex conditions,
\begin{equation}\label{eq: BCs}
\left\{\begin{array}{l}
f(x) \mbox{ is continuous on } \G \mbox{ and }\\
\displaystyle\sum_{e \in \mathcal{E}_v} \dfrac{\ud f_e}{\ud x_e}(v)=0 \mbox{ at each vertex } v
\end{array}\right. \ .
\end{equation}
By convention, $\dfrac{\ud f_e}{\ud x_e}(v)$ is taken to be the outgoing derivative at the end of the interval $[0,\ell_e]$ corresponding to $v$. The second Sobolev space on $\G$ is the direct sum of the second Sobolev spaces on the set of intervals,
\begin{equation}
H^2(\G) = \bigoplus_{e\in\mathcal{E}} H^2([0,\ell_e]).
\end{equation}
The domain of $\La$ is the set of functions  $f\in H^2(\G)$ that satisfy \eqref{eq: BCs}. With these vertex conditions, $\La$ has an infinite number of non-negative eigenvalues \cite{BKbook} which we will denote as
\begin{equation}
0\leq k_1^2 \leq k_2^2 \leq \ldots
\end{equation}
with $k_j \in \mathbb{R}$. 

Some of these eigenvalues may correspond to eigenfunctions where $f_e(0) = f_e(\ell_e)=0$ for all $e\in \mathcal{E}$.  In this case the eigenfunction is also an eigenfunction of the Laplacian with Dirichlet vertex conditions.  The spectrum of the graph with Dirichlet vertex conditions is just the union of the spectra of the Laplacian on $E$ disconnected intervals $[0,\ell_e]$ with Dirichlet boundary conditions.  We call this the \emph{Dirichlet spectrum} of $\G$.

\section{Spectral determinants of equilateral graphs}\label{sec:spec det equilateral}

In this section we relate the spectral determinants of $\De$ and $ \widetilde{\La}$. This is used in section \ref{sec:eqtrees} to prove theorem 
\ref{thm:equilateral spanning trees}. The \textit{spectral determinant} of the harmonic Laplacian $\De$ is the product of its nonzero eigenvalues, 
\begin{equation}
{\det}^\prime(\De)={\prod_{j=1}^V}^\prime \lambda_j \ .
\end{equation}
The prime shows that eigenvalues of zero are omitted from the product. 
Correspondingly, the spectral determinant of the Laplace operator $ \widetilde{\La}$ on an equilateral metric graph is, formally,
\begin{equation}\label{eq:detLa}
{\det}^\prime( \widetilde{\La}) = {\prod_{j=1}^\infty}^\prime k^2_j \,. 
\end{equation}

The spectral determinant of quantum graphs has been studied in a number of situations \cite{Des00, Des01, Fri06, HarKir11, HarKirTex12}.  Here we use a zeta function regularization of the product.
The \textit{spectral zeta function}, $\Zeta(s)$, is a generalization of the Riemann zeta function where the nonzero eigenvalues take the place of the integers. The spectral zeta function corresponding to $ \widetilde{\La}$ is
\begin{equation}
\Zeta(s) = \sum_{j=1}^\infty k_j^{-2s},
\end{equation}
which converges for $\Re (s)> 1$.  Making an analytic continuation to the left of $\Re (s)=1$, the regularized spectral determinant is defined as 
\begin{equation}
{\det}^\prime( \widetilde{\La}) = \mbox{exp}(-\Zeta'(0)).
\end{equation}

There is a correspondence between the eigenvalues of $\De$ and $\widetilde{\La}$ \cite{Below85} (see also \cite{Kuc03, Pank06, BKbook}).
\begin{proposition}\label{lemma: relation}
Suppose $G$ is a discrete graph and $\widetilde{\G}$ is the corresponding equilateral metric graph with edge length $\ell$.  If $ k^2$ is not in the Dirichlet spectrum of $ \widetilde{\La}$, then
\begin{equation}
k^2 \in \sigma( \widetilde{\La}) \iff  1-\cos(k\ell) \in \sigma(\De)
\end{equation}
where $\sigma(\cdot)$ denotes the spectrum of the operator.
\end{proposition}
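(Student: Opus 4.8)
The plan is to build an explicit dictionary between eigenfunctions of $\widetilde{\La}$ and eigenvectors of $\De$ by solving the edge ODE in terms of vertex data. On each edge $e=(u,v)\cong[0,\ell]$, an eigenfunction of $\widetilde{\La}$ with eigenvalue $k^2\neq 0$ satisfies $-f_e''=k^2 f_e$, so $f_e$ is a linear combination of $\cos(kx)$ and $\sin(kx)$. The continuity condition in \eqref{eq: BCs} forces $f_e(0)=f(u)$ and $f_e(\ell)=f(v)$, and since $k^2$ is assumed not to lie in the Dirichlet spectrum we have $\sin(k\ell)\neq 0$. This lets me invert and write $f_e$ uniquely as the interpolant
\begin{equation}
f_e(x)=\frac{\sin\!\big(k(\ell-x)\big)}{\sin(k\ell)}\,f(u)+\frac{\sin(kx)}{\sin(k\ell)}\,f(v)\ .
\end{equation}
Thus an eigenfunction is completely determined by its vertex values, and the entire problem reduces to the Kirchhoff derivative condition.

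Next I would impose current conservation at each vertex $w$. Differentiating the interpolant and taking the outgoing derivative at the $x_e=0$ endpoint (using the symmetry of the interpolant to place $w$ at the local origin of each incident edge), each $e=(w,u)\in\mathcal{E}_w$ contributes $\tfrac{k}{\sin(k\ell)}\big(-\cos(k\ell)f(w)+f(u)\big)$; summing over $\mathcal{E}_w$ and clearing the common nonzero factor $k/\sin(k\ell)$ turns \eqref{eq: BCs} into
\begin{equation}
\cos(k\ell)\,d_w\, f(w)=\sum_{u\sim w} f(u)\ .
\end{equation}
Rewriting this through $\De=\mathbf{D}^{-1}\cL$ shows it is exactly $(\De f)(w)=\big(1-\cos(k\ell)\big)f(w)$, so the vector of vertex values is an eigenvector of $\De$ with eigenvalue $1-\cos(k\ell)$. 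Every step is reversible: starting from an eigenvector of $\De$ at eigenvalue $1-\cos(k\ell)$, the interpolant defines a function on $\widetilde{\G}$ that is automatically continuous and, by the same computation, satisfies the Kirchhoff condition, hence lies in the domain of $\widetilde{\La}$ with eigenvalue $k^2$. The case $k=0$ is handled separately using linear interpolants, recovering the eigenvalue $1-\cos 0=0$ and the kernel of $\cL$.

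The step that requires care — and the main obstacle — is nontriviality: I must ensure the correspondence sends nonzero objects to nonzero objects in both directions. For the forward map, a nonzero eigenfunction whose vertex values all vanished would satisfy $f_e(0)=f_e(\ell)=0$ on every edge and hence be a Dirichlet eigenfunction, which is excluded precisely by the hypothesis that $k^2$ is not in the Dirichlet spectrum; therefore the vertex vector is genuinely nonzero. For the reverse map, the interpolant vanishes identically only if all vertex values are zero, so a nonzero eigenvector of $\De$ produces a nonzero eigenfunction. This nontriviality, together with the invertibility of the interpolation formula guaranteed by $\sin(k\ell)\neq 0$, is what upgrades the pointwise computation into the claimed equivalence of spectra.
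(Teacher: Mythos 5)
Your proof is correct, but there is nothing in the paper to compare it against: the authors do not prove Proposition \ref{lemma: relation} at all --- it is quoted from von Below \cite{Below85} (see also \cite{Kuc03, Pank06, BKbook}). What you have reconstructed is essentially the standard argument behind that citation: away from the Dirichlet spectrum $\sin(k\ell)\neq 0$, so the restriction of an eigenfunction to an edge is the unique sine interpolant of its two vertex values, and the Neumann--Kirchhoff condition \eqref{eq: BCs} at a vertex $w$ collapses to $\cos(k\ell)\,d_w f(w)=\sum_{u\sim w}f(u)$, which is exactly $(\De f)(w)=\bigl(1-\cos(k\ell)\bigr)f(w)$; conversely, an eigenvector of $\De$ generates an eigenfunction of $\widetilde{\La}$ through the same interpolant. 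Your attention to the two delicate points --- nontriviality of the correspondence in both directions, and the exceptional value $k=0$ handled by linear interpolation --- is precisely what upgrades the formal computation to an equivalence of spectra, and both are handled correctly. One small streamlining: in the forward direction you need not invoke the Dirichlet hypothesis a second time, since if all vertex values vanish then the interpolation formula (valid because $\sin(k\ell)\neq 0$) already forces $f\equiv 0$ on every edge, contradicting that $f$ is an eigenfunction. Note also that your argument establishes membership equivalence, which is all the proposition asserts; the cited references prove a finer, multiplicity-preserving correspondence, but the paper's later multiplicity bookkeeping is imported separately via \cite{HarWey18}, so nothing is missing from your proof for the purposes of this statement.
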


 By proposition \ref{lemma: relation}, every eigenvalue of $\De$ can be written as $1-\cos(t\ell)$ for some $t \in \left[0,\frac{\pi}{\ell}\right]$. Let $T = \{t_j\}_{j=1}^V$ be the set $0 = t_1 < t_2 \leq \ldots \leq t_V \leq \frac{\pi}{\ell}$ such that $1-\cos(t_j\ell) \in \sigma(\De)$ (including multiplicity).  We know $0=t_1< t_2$ as zero is an eigenvalue of $\De$ with multiplicity one  when $G$ is connected. This allows us to write the spectral zeta function of $ \widetilde{\La}$ in terms of the finite set $T$. The following lemma is a combination of equations (20) and (24) from  \cite{HarWey18}.
\begin{lemma}\label{lemma: zeta function}
Suppose $G$ is a discrete graph and $ \widetilde{\G}$ is the corresponding equilateral metric graph with edge length $\ell$. For $\Re(s) \neq \frac{1}{2}$, the spectral zeta function of $ \widetilde{\La}$ is
\begin{align}\label{eq:zeta function}
\Zeta (s) &= \left(4^s(\beta - 1) + 2\right) \left(\frac{\ell}{2\pi}\right)^{2s}\zeta_R(2s)\nonumber\\\hspace{.5cm}&+\left(\dfrac{\ell}{2\pi}\right)^{2s}\sum_{j=2}^V \left(\zeta_H\left(2s,\frac{t_j\ell}{2\pi}\right) + \zeta_H\left(2s,1-\frac{t_j\ell}{2\pi}\right)\right) 
\end{align}
where $\zeta_R(z)$ is the Riemann zeta function, $\zeta_H(z,a)$ is the Hurwitz zeta function, and $t_j\in T$.
\end{lemma}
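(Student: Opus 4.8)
The plan is to compute $\Zeta(s)=\sum_j k_j^{-2s}$ directly, sorting the nonzero eigenvalues of $\widetilde{\La}$ into the two families supplied in the background: those governed by proposition \ref{lemma: relation} and the Dirichlet eigenvalues $(n\pi/\ell)^2$. First I would unfold the correspondence for each interior eigenvalue of $\De$. Fix $t_j\in(0,\pi/\ell)$; since then every $k$ with $\cos(k\ell)=\cos(t_j\ell)$ is a non-integer multiple of $\pi/\ell$, proposition \ref{lemma: relation} applies and shows that the positive solutions $k=\tfrac{2\pi m}{\ell}+t_j$ ($m\ge 0$) and $k=\tfrac{2\pi m}{\ell}-t_j$ ($m\ge 1$) are exactly the $k$ whose squares are eigenvalues of $\widetilde{\La}$ lying over $\lambda_j=1-\cos(t_j\ell)$, carrying its multiplicity. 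Summing $k^{-2s}$ over these two progressions, pulling out $(\ell/2\pi)^{2s}$, and reindexing the second sum by $m\mapsto m-1$ turns each progression into a Hurwitz zeta by the definition $\zeta_H(z,a)=\sum_{m\ge0}(m+a)^{-z}$, giving precisely $(\ell/2\pi)^{2s}\bigl[\zeta_H(2s,t_j\ell/2\pi)+\zeta_H(2s,1-t_j\ell/2\pi)\bigr]$. Running $j$ over $2,\dots,V$ yields the sum in \eqref{eq:zeta function}, so the real content of the lemma sits in the prefactor of $\zeta_R(2s)$, that is, in the eigenvalues at the integer points $k=n\pi/\ell$ where proposition \ref{lemma: relation} is silent.

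The hard part is therefore these integer points, which I would treat by computing the multiplicity of $k=n\pi/\ell$ as an eigenvalue of $\widetilde{\La}$ from scratch. On each edge the general $k^2$-eigenfunction is $f_e=a_e\cos(kx)+b_e\sin(kx)$; because $\sin(n\pi)=0$, continuity at the vertices constrains only the cosine coefficients (forcing $\phi(v)=(-1)^n\phi(u)$ on each edge for the vertex values $\phi$), while the Kirchhoff condition constrains only the sine coefficients. The two problems decouple, so the multiplicity is the sum of the two kernel dimensions. The cosine part contributes the constant vertex function for $n$ even and, for $n$ odd, a bipartite $\pm1$ signing existing exactly when $G$ is bipartite — these are the $\lambda=0$ and $\lambda=2$ eigenvectors of $\De$. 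The sine part is governed by the incidence matrix, signed for $n$ even and unsigned for $n$ odd, whose edge-kernel is the cycle space of dimension $\beta$ for the signed matrix, and of dimension $\beta$ or $\beta-1$ for the unsigned matrix according as $G$ is or is not bipartite (equivalently, as the signless Laplacian $\mathbf{D}+\mathbf{A}$ is singular or not). Collecting terms, the total multiplicity at $k=n\pi/\ell$ is $1+\beta$ for even $n$ always, $1+\beta$ for odd $n$ when $G$ is bipartite, and $\beta-1$ for odd $n$ when $G$ is not bipartite.

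Finally I would assemble the integer-point contribution and match it to the claimed prefactor using $(\ell/\pi)^{2s}=4^s(\ell/2\pi)^{2s}$, $\sum_{n\text{ odd}}n^{-2s}=(1-4^{-s})\zeta_R(2s)$, and $\zeta_H(2s,\tfrac12)=(4^s-1)\zeta_R(2s)$. When $G$ is not bipartite every $t_j$ in the Hurwitz sum is interior, and summing multiplicity $1+\beta$ over even $k$ with $\beta-1$ over odd $k$ gives coefficient $(1+\beta)+(\beta-1)(4^s-1)=4^s(\beta-1)+2$ for $(\ell/2\pi)^{2s}\zeta_R(2s)$, exactly as stated. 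When $G$ is bipartite the eigenvalue $t_V=\pi/\ell$ makes the $j=V$ term of the Hurwitz sum degenerate to $2\zeta_H(2s,\tfrac12)=2(4^s-1)\zeta_R(2s)$; adding this to the stated prefactor gives total coefficient $4^s(\beta-1)+2+2(4^s-1)=4^s(\beta+1)$, which is precisely what the uniform multiplicity $1+\beta$ at all integer $k$ demands. The identity is first obtained in the half-plane of convergence, where rearranging the (positive) series is justified, and then extended by analytic continuation to all $s\neq\tfrac12$, the excluded value being the common simple pole at $2s=1$ of the Riemann and Hurwitz zeta factors; this accounts for the hypothesis $\Re(s)\neq\tfrac12$.

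I expect the main obstacle to be exactly this bookkeeping at the integer points: proposition \ref{lemma: relation} gives no information there, the multiplicity genuinely depends on the parity of $n$ and on bipartiteness, yet a single parity-independent prefactor $4^s(\beta-1)+2$ (together with the folded boundary term in the bipartite case) must reproduce all of it. The decoupling of the cosine and sine parts and the identification of the signed and unsigned incidence kernels are what make the count tractable and let the two cases collapse into one formula.
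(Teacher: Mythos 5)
Your proposal is correct, but it takes a genuinely different route from the paper, which offers no derivation at all: the lemma is there justified purely by citation, as ``a combination of equations (20) and (24)'' of \cite{HarWey18}. Your argument is in effect a self-contained reconstruction of that reference. The two ingredients you isolate are the right ones: away from the Dirichlet points the progressions $k=\tfrac{2\pi m}{\ell}+t_j$ ($m\ge 0$) and $k=\tfrac{2\pi m}{\ell}-t_j$ ($m\ge 1$) produce exactly the Hurwitz terms, and at $k=n\pi/\ell$ your decoupling of the cosine (continuity) and sine (Kirchhoff) systems, with kernels given by the constant/bipartite-signing vectors and by the signed/unsigned incidence matrices, yields the correct multiplicities $1+\beta$ (even $n$), $1+\beta$ (odd $n$, bipartite), $\beta-1$ (odd $n$, non-bipartite). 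Your arithmetic check that the single prefactor $4^s(\beta-1)+2$ also covers the bipartite case once the degenerate term $2\zeta_H\!\left(2s,\tfrac12\right)=2(4^s-1)\zeta_R(2s)$ from $t_V=\pi/\ell$ is folded in is likewise correct, and it is a point worth making explicit since proposition \ref{lemma: relation} is silent precisely there. One caveat you should state rather than slide over: proposition \ref{lemma: relation} as written is only an equivalence of spectral \emph{membership}, whereas your unfolding requires that multiplicities transfer, i.e.\ that for non-Dirichlet $k$ the multiplicity of $k^2$ in $\sigma(\widetilde{\La})$ equals the multiplicity of $1-\cos(k\ell)$ in $\sigma(\De)$. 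That stronger statement is true --- von Below's theorem \cite{Below85} gives an isomorphism of eigenspaces --- but it must be invoked explicitly, because the zeta function is exactly the kind of object that a miscounted multiplicity would falsify. With that strengthening cited, your derivation stands, and what it buys is independence from \cite{HarWey18}; what the paper's citation buys is brevity and the transfer of all this bookkeeping to a reference where it has already been checked.
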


Using lemma \ref{lemma: zeta function} we can obtain a relationship between the spectral determinants of the harmonic Laplacian of a discrete graph and the Laplacian of the corresponding equilateral quantum graph.
\begin{proposition}\label{theorem: spectral determinant}
Suppose $G$ is a connected discrete graph and $ \widetilde{\G}$ is the corresponding equilateral metric graph with edge length $\ell$. Then 
\begin{equation}
{\det}^\prime( \widetilde{\La}) = 2^{E-1}\ell^{\beta+1}{\det}^\prime(\De)\ .
\end{equation}
\end{proposition}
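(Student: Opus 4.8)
The plan is to compute $\Zeta'(0)$ directly from the closed form in Lemma~\ref{lemma: zeta function} and then invoke the definition $\det'(\widetilde{\La}) = \exp(-\Zeta'(0))$. Write $\Zeta(s) = A(s) + B(s)$, where $A(s) = (4^s(\beta-1)+2)(\ell/2\pi)^{2s}\zeta_R(2s)$ is the Riemann piece and $B(s) = (\ell/2\pi)^{2s}\sum_{j=2}^V(\zeta_H(2s,a_j) + \zeta_H(2s,1-a_j))$, with $a_j := t_j\ell/2\pi$, is the Hurwitz piece. The whole computation rests on four standard special-function values at the origin: $\zeta_R(0) = -\tfrac12$, $\zeta_R'(0) = -\tfrac12\log(2\pi)$, $\zeta_H(0,a) = \tfrac12 - a$, and Lerch's formula $\partial_z\zeta_H(z,a)\big|_{z=0} = \log\Gamma(a) - \tfrac12\log(2\pi)$. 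Throughout I must remember the chain-rule factor of $2$ arising from the argument $2s$.

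For the Riemann piece I would differentiate the product of the three factors at $s=0$, using $h(0)=\beta+1$ and $h'(0)=2(\beta-1)\log 2$ for $h(s)=4^s(\beta-1)+2$, together with the contribution $2\log(\ell/2\pi)$ from $(\ell/2\pi)^{2s}$ and the values of $\zeta_R,\zeta_R'$ above. After the $2\pi$ factors cancel one finds $A'(0) = -(\beta-1)\log 2 - (\beta+1)\log\ell$.

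The Hurwitz piece is where the graph data enters. First I would observe that the value vanishes, $\zeta_H(0,a_j)+\zeta_H(0,1-a_j) = (\tfrac12-a_j)+(\tfrac12-(1-a_j)) = 0$, so the prefactor-derivative term drops out and $B'(0)$ equals $2\sum_{j=2}^V[\log\Gamma(a_j)+\log\Gamma(1-a_j)-\log(2\pi)]$. Applying the reflection formula $\Gamma(a)\Gamma(1-a)=\pi/\sin(\pi a)$ collapses each pair to $\log(\pi/\sin(\pi a_j))$, giving $B'(0) = -2(V-1)\log 2 - 2\sum_{j=2}^V\log\sin(\pi a_j)$. Now comes the key geometric substitution: since $\sigma(\De)\ni\lambda_j = 1-\cos(t_j\ell) = 2\sin^2(t_j\ell/2)$ and $t_j\ell/2\in[0,\pi/2]$ forces the nonnegative root, $\sin(\pi a_j) = \sin(t_j\ell/2) = \sqrt{\lambda_j/2}$, so $\log\sin(\pi a_j) = \tfrac12\log\lambda_j - \tfrac12\log 2$. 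Summing over $j=2,\dots,V$ turns $\sum\log\lambda_j$ into $\log\det'(\De)$ and yields $B'(0) = -(V-1)\log 2 - \log\det'(\De)$.

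Finally I would add the pieces, $\Zeta'(0) = -[(\beta-1)+(V-1)]\log 2 - (\beta+1)\log\ell - \log\det'(\De)$, and simplify the coefficient of $\log 2$ with $\beta = E-V+1$, which gives $(\beta-1)+(V-1) = E-1$. Exponentiating $-\Zeta'(0)$ then produces $\det'(\widetilde{\La}) = 2^{E-1}\ell^{\beta+1}\det'(\De)$, as claimed. I expect the only real care to lie in the bookkeeping—correctly tracking the factor of $2$ from $2s$ and selecting the nonnegative branch of the square root—rather than in any conceptual obstacle; the reflection-formula step is the one place where a slip would quietly change the power of $2$.
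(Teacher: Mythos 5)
Your proposal is correct and follows essentially the same route as the paper's own proof: both compute $\Zeta'(0)$ from Lemma~\ref{lemma: zeta function} using $\zeta_R(0)=-\tfrac12$, $\zeta_R'(0)=-\tfrac12\ln(2\pi)$, $\zeta_H(0,a)=\tfrac12-a$, Lerch's formula, and the reflection formula, then convert $\sin(t_j\ell/2)$ into $\lambda_j$ via the half-angle identity. The only difference is organizational (your explicit $A$/$B$ split and the substitution $\sin(\pi a_j)=\sqrt{\lambda_j/2}$ versus the paper's $4\sin^2(t_j\ell/2)=2\lambda_j$), and all your intermediate values check out.
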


\begin{proof}

Since $\zeta_H(0,a) = \frac{1}{2}-a$ \cite{NITS}, it follows that,
\begin{equation}
\zeta_H(0,a) + \zeta_H(0,1-a) =0 \ .
\end{equation}
Using lemma \ref{lemma: zeta function}, 
\begin{equation}
\Zeta '(0) = -\ln\left(2^{\beta-1}\right) - \ln(\ell^{\beta+1})   + 2\sum_{j=2}^V \left[\ln\left(\Gamma\left(\frac{t_j\ell}{2\pi}\right)\Gamma\left(1-\frac{t_j\ell}{2\pi}\right)\right)- \ln{2\pi}\right]\\
\end{equation}
since $\zeta_R(0)=-1/2$, $\zeta'_R(0) = -\ln(2\pi)/2$, and $\zeta'_H(0,a) = \ln(\Gamma(a))-\frac{1}{2}\ln(2\pi)$ \cite{NITS}. Additionally $\Gamma(z)\Gamma(1-z) = \pi / \sin(\pi z)$ \cite{NITS} and $4\sin^2\left(\frac{t_j\ell}{2}\right) = 2(1-\cos(t_j\ell) )= 2\lambda_j$, so 
\begin{align}
-\Zeta '(0)&=  \ln\left(2^{\beta-1}\right) + \ln(\ell^{\beta+1})   + \sum_{j=2}^V \ln\left(4\sin^2\left(\frac{t_j\ell}{2}\right)\right)\\
&=   \ln\left(2^{V + \beta-2}\ell^{\beta+1}\prod_{j=2}^V \lambda_j \right).
\end{align}
Taking the exponential of this expression yields the result.

\end{proof}

\subsection{Example: complete bipartite graphs}

Proposition \ref{theorem: spectral determinant} simplifies for complete bipartite graphs. 
The \textit{complete bipartite graph}, $K_{m,p}$, consists of $m+p$ vertices and $mp$ edges. The vertices can be divided into two disjoint sets, $\mathcal{M}$ of size $m$ and $\mathcal{P}$ of size $p$, such that every vertex in $\mathcal{M}$ is connected to every vertex in $\mathcal{P}$ and no vertex is connected to another vertex from the same set; see for example Figure \ref{fig: complete bipartite}.   A \textit{star graph} is a complete bipartite graph with $m=1$ and $p=V-1 = E$.

\begin{corollary}\label{cor: complete bipartitie}
For an equilateral complete bipartite graph $K_{m,p}$ with edge length $\ell$,
\begin{equation}\label{eq:complete}
{\det}^\prime( \widetilde{\La}) = 2^{mp}\ell^{mp-m-p+2}.
\end{equation}
In particular, for a star graph, 
\begin{equation}\label{eq: star}
{\det}^\prime( \widetilde{\La}) = 2^E\ell.
\end{equation}
\end{corollary}

\begin{proof}
The eigenvalues of a discrete complete bipartite graph $K_{m,p}$ are $0$ with multiplicity one, $1$ with multiplicity $m+p-2$, and $2$ with multiplicity one \cite{Chungbook}, and therefore
\begin{equation}
{\det}^\prime(\De) = 2.
\end{equation}
The complete bipartite graph $K_{m,p}$ has $\beta = E-V+1 = mp-(m+p)+1$. Substituting into  proposition \ref{theorem: spectral determinant} produces the result.  
The case of a star graph is obtained by setting $m=1$ and $p=V-1=E$. 
\end{proof}


Corollary \ref{cor: complete bipartitie} agrees with \cite{HarWey18} where the spectral determinant was obtained from the quantum  spectral zeta function directly. The star graph formula \eqref{eq: star} agrees with \cite{HarKir11} where the spectral zeta functions of quantum star graphs were calculated using a contour integral approach.

\section{Spanning trees of equilateral graphs}\label{sec:eqtrees}

Kirchhoff's matrix tree theorem \cite{Kirchhoff} gives the number of spanning trees of a discrete graph in terms of the spectral determinant of the combinatorial Laplacian. In this section we prove theorem \ref{thm:equilateral spanning trees} which reframes this result in terms of the spectral determinant of an equilateral quantum graph.

\subsection{Regular equilateral graphs}

We start by proving theorem \ref{thm:equilateral spanning trees} for a $d$-regular graph where the proof is straightforward.

\begin{proposition}\label{thm: regular}
Suppose $ \widetilde{\G}$ is a connected $d$-regular equilateral metric graph with edge length $\ell$. Then
\begin{equation}
\# \mbox{ spanning trees } =
\dfrac{d^{V-1}}{2^{E-1}\ell^{\beta+1}V}{\det}'( \widetilde{\La}).
\end{equation}
\end{proposition}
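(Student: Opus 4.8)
The plan is to combine Kirchhoff's matrix tree theorem (Theorem \ref{lem:matrix tree theorem}) with the spectral determinant relation of Proposition \ref{theorem: spectral determinant}, using the special structure of a $d$-regular graph to bridge the combinatorial Laplacian $\cL$ and the harmonic Laplacian $\De$.

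First I would observe that for a $d$-regular graph the degree matrix is $\mathbf{D} = d\,\mathbf{I}$, so that $\De = \mathbf{D}^{-1}\cL = d^{-1}\cL$. Consequently the eigenvalues of $\De$ are exactly $\lambda_j = \mu_j/d$, and since $G$ is connected both operators carry a single zero eigenvalue ($\mu_1 = \lambda_1 = 0$) while the remaining $V-1$ eigenvalues are nonzero. Taking the product over the nonzero eigenvalues gives
\begin{equation}
{\det}'(\De) = \prod_{j=2}^V \frac{\mu_j}{d} = \frac{1}{d^{V-1}}\,{\det}'(\cL).
\end{equation}

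Next I would invoke Kirchhoff's theorem to write $\#\text{ spanning trees} = \frac{1}{V}{\det}'(\cL) = \frac{d^{V-1}}{V}{\det}'(\De)$, and then substitute the identity ${\det}'(\widetilde{\La}) = 2^{E-1}\ell^{\beta+1}{\det}'(\De)$ from Proposition \ref{theorem: spectral determinant} to replace ${\det}'(\De)$ by $2^{-(E-1)}\ell^{-(\beta+1)}{\det}'(\widetilde{\La})$. Collecting the factors then yields the claimed formula directly.

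There is no real obstacle here: the argument is a short chain of substitutions, and the only point requiring care is the bookkeeping of the zero eigenvalue — connectedness guarantees exactly one zero mode for $\cL$ (hence for $\De$), so the primed determinants run over the same index set and the rescaling factor is precisely $d^{V-1}$. The genuine content of this proposition already resides in Proposition \ref{theorem: spectral determinant}; regularity merely makes the passage from $\De$ back to $\cL$ a clean scalar rescaling. This is exactly why I expect the general equilateral case (Theorem \ref{thm:equilateral spanning trees}) to demand more work: there $\mathbf{D}$ is no longer a multiple of the identity, so ${\det}'(\De)$ relates to ${\det}'(\cL)$ through the full product of degrees $\prod_{v\in\V} d_v$ rather than a single power $d^{V-1}$, and one must argue that passing from $\De$ to $\cL$ contributes exactly $\det(\mathbf{D})$ on the quotient by the kernel.
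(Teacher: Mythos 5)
Your proof is correct and follows the paper's own argument exactly: use $d$-regularity to write $\De = d^{-1}\cL$, deduce ${\det}'(\De) = d^{1-V}{\det}'(\cL)$, apply Kirchhoff's matrix tree theorem, and substitute the relation ${\det}'(\widetilde{\La}) = 2^{E-1}\ell^{\beta+1}{\det}'(\De)$ from Proposition \ref{theorem: spectral determinant}. (One side remark: your closing guess about the general case is slightly off --- the paper's Lemma \ref{thm:discretelemma} shows the correction factor is $2E/\prod_{v\in\V} d_v$, arising from a characteristic polynomial expansion, not simply $\det(\mathbf{D})$ restricted to the complement of the kernel.)
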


\begin{proof}
Since $ \widetilde{\G}$ is a $d$-regular graph,
\begin{equation}
\De = \mathbf{D}^{-1}\cL = \dfrac{1}{d}\cL.
\end{equation}
If $\mu$ is an eigenvalue of $\cL$, then $\mu / d$ is an eigenvalue of $\De$ so
\begin{equation}
{\det}'(\De) = \prod_{i=2}^V \dfrac{\mu_i}{d}=d^{1-V}{\det}'(\cL).
\end{equation}
By Kirchhoff's matrix tree theorem,
\begin{equation}\label{eq:regular}
\# \mbox{ spanning trees } =\dfrac{d^{V-1}}{V}\mbox{det}'(\De).
\end{equation}
Combining equation \eqref{eq:regular} with proposition \ref{theorem: spectral determinant} produces the result.
\end{proof}

\subsection{General equilateral graphs}

Theorem \ref{thm:equilateral spanning trees} follows from proposition  \ref{theorem: spectral determinant} and the next lemma, which is an adaptation of a theorem by Chung and Yau \cite{ChYa} and generalizes equation \eqref{eq:regular}.

\begin{lemma} \label{thm:discretelemma}
For a connected discrete graph $G$, 
\begin{equation}
\# \mbox{ spanning trees } =\dfrac{\prod_{v\in\V} d_v}{2E}{\det}'(\De).
\end{equation}
\end{lemma}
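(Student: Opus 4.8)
The plan is to express the product of the nonzero eigenvalues of $\De=\mathbf{D}^{-1}\cL$ through the first derivative of its characteristic polynomial at the origin, and then to evaluate that derivative using the cofactor form of Kirchhoff's theorem. Because $G$ is connected, $\cL$ has a simple zero eigenvalue with eigenvector $\mathbf{1}$, and since $\De\mathbf{1}=\mathbf{D}^{-1}\cL\mathbf{1}=0$ the operator $\De$ inherits a simple zero eigenvalue as well. Writing $p(\lambda)=\det(\lambda I-\De)=\lambda\prod_{j=2}^{V}(\lambda-\lambda_j)$, the product of the nonzero eigenvalues is recovered as
\[
{\det}'(\De)=(-1)^{V-1}\,p'(0).
\]

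First I would pass to the generalized eigenvalue problem $\cL x=\lambda\mathbf{D}x$. Factoring out $\mathbf{D}^{-1}$ gives
\[
p(\lambda)=\det\!\left(\lambda I-\mathbf{D}^{-1}\cL\right)=\frac{1}{\prod_{v\in\V}d_v}\,\det\!\left(\lambda\mathbf{D}-\cL\right),
\]
so it suffices to differentiate $g(\lambda):=\det(\lambda\mathbf{D}-\cL)$ at $\lambda=0$. By Jacobi's formula $g'(\lambda)=\operatorname{tr}\!\left(\operatorname{adj}(\lambda\mathbf{D}-\cL)\,\mathbf{D}\right)$, so that $g'(0)=\operatorname{tr}\!\left(\operatorname{adj}(-\cL)\,\mathbf{D}\right)=(-1)^{V-1}\operatorname{tr}\!\left(\operatorname{adj}(\cL)\,\mathbf{D}\right)$, using $\operatorname{adj}(-\cL)=(-1)^{V-1}\operatorname{adj}(\cL)$.

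The key graph-theoretic input enters through $\operatorname{adj}(\cL)$. Since $\cL\,\operatorname{adj}(\cL)=\det(\cL)I=0$ and $\cL$ is symmetric with one-dimensional kernel spanned by $\mathbf{1}$, each column of the symmetric matrix $\operatorname{adj}(\cL)$ lies in that kernel, whence $\operatorname{adj}(\cL)=\tau J$ for the all-ones matrix $J$ and a scalar $\tau$; by the matrix-tree theorem (Theorem \ref{lem:matrix tree theorem}) the diagonal principal cofactors $\det(\cL[i])$ equal the number of spanning trees, so $\tau$ is precisely that number. Then $\operatorname{tr}(\tau J\mathbf{D})=\tau\sum_{v\in\V}d_v=2E\tau$ by the handshaking lemma. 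Assembling the pieces, the two factors of $(-1)^{V-1}$ cancel and one obtains ${\det}'(\De)=2E\tau/\prod_{v\in\V}d_v$, which rearranges to the claim.

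The step I expect to be most delicate is the sign and kernel bookkeeping: one must justify that the primed determinant equals $(-1)^{V-1}p'(0)$, which relies on the zero eigenvalue being simple, and then ensure that the factor $(-1)^{V-1}$ produced by $\operatorname{adj}(-\cL)$ cancels it. A shorter but less self-contained alternative, closer to the cited Chung--Yau result, is to note that $\De$ is similar to the symmetric normalized Laplacian $\mathbf{D}^{-1/2}\cL\,\mathbf{D}^{-1/2}$ through conjugation by $\mathbf{D}^{1/2}$, so the two share their spectrum and their primed determinants, after which the Chung--Yau spanning-tree formula for the normalized Laplacian may be invoked directly. Either way the only genuinely combinatorial ingredient is the cofactor form of the matrix-tree theorem.
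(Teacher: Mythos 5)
Your proposal is correct and follows essentially the same route as the paper: both factor the characteristic polynomial of $\De$ as $\bigl(\prod_{v\in\V} d_v\bigr)^{-1}\det\bigl(\pm(\cL-\lambda\mathbf{D})\bigr)$, extract its linear coefficient at $\lambda=0$ (legitimate because connectivity makes the zero eigenvalue simple), identify the principal cofactors $\det(\cL[i])$ with the number of spanning trees via Kirchhoff's theorem, and finish with $\sum_{v\in\V} d_v = 2E$. The only difference is the computational device for that coefficient --- you use Jacobi's formula together with the adjugate structure $\operatorname{adj}(\cL)=\tau J$ (which is slightly more than needed, since the trace against the diagonal matrix $\mathbf{D}$ only sees the diagonal cofactors), while the paper uses the multilinear row-replacement expansion of the determinant; your sign bookkeeping with the two cancelling factors of $(-1)^{V-1}$ is correct.
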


\begin{proof}
Since $\De$ has one eigenvalue of zero, its characteristic polynomial is
\begin{equation}
p(x) = \det(\De-x\mathbf{I}) = -x\prod_{i=2}^V (\lambda_i-x) \ .
\end{equation}
Therefore the coefficient of the linear term in $p(x)$ is
\begin{equation}\label{eq:coefficient1}
-\prod_{i=2}^V \lambda_i = -{\det}'(\De).
\end{equation}
On the other hand, since $\De = \mathbf{D}^{-1}{\cL}$,
\begin{equation}
\De - x\mathbf{I}  = \mathbf{D}^{-1}(\cL-x\mathbf{D}),
\end{equation}
and the characteristic polynomial can also be written as
\begin{equation}
p(x) = \left(\prod_{v\in\V} d_v\right)^{-1}\det(\cL-x\mathbf{D}).
\end{equation}
Determinants are linear along the rows of a matrix so
\begin{equation}
\det(\mathbf{A}+\mathbf{B}) = \sum_{S \subseteq [n]} \det \mathbf{B}_S
\end{equation}
where $\mathbf{A}$ and $\mathbf{B}$ are square matrices of size $n$, $[n]=\{1,2,\ldots,n\}$, and $\mathbf{B}_S$ is the matrix $\mathbf{B}$ whose rows that are indexed by $S$ are replaced with the corresponding rows from $\mathbf{A}$. Hence,
\begin{align}
\det(\cL-x\mathbf{D}) &= \det(\cL) + \sum_{\substack{S \subset [V]\\
                  \lvert S\rvert = V-1\\}} \det(x\mathbf{D}_S) + \sum_{\substack{S \subset [V]\\ \lvert S\rvert = V-2}}\det(x\mathbf{D}_S) + \ldots + \det(x\mathbf{D})\\
&= \det(\cL) -x\sum_{i=1}^V d_i\det(\cL[i]) + c_2x^2 + \ldots + c_nx^n
\end{align}
where $\cL[i]$ is the matrix $\cL$ with both row $i$ and column $i$ removed.   By theorem \ref{lem:matrix tree theorem}, $\det(\cL[i])$ is the number of spanning trees of $G$. Therefore, we can see that
\begin{equation}\label{eq:coefficient2}
{\det}'(\De) = \dfrac{\sum_{v\in\V} d_v}{\prod_{v\in \V} d_v} \times 
\# \mbox{ spanning trees } \, ,
\end{equation}
which proves the lemma as $\sum_{v\in\V} d_v = 2E$.
\end{proof}

We have now established theorem \ref{thm:equilateral spanning trees}.  So, for example, a star graph has $\beta=0$ and one vertex has degree $E$ while the others have degree $1$.  Then,
using the spectral determinant of an equilateral star graph (corollary \ref{cor: complete bipartitie}), we see
\begin{equation}
\# \mbox{ spanning trees } =\dfrac{\prod_{v\in\V} d_v}{E2^E \ell^{\beta +1}} 2^E \ell = 1
\end{equation}
as required. 

\section{Spectral determinants of generic quantum graphs}\label{sec:det comparison}

In this section we compare the spectral determinants of equilateral and generic quantum graphs where the corresponding discrete graphs are the same.  In particular, given an equilateral graph $ \widetilde{\G}$ with edge length $\ell$, we will consider a generic graph whose edge lengths lie in the interval $[\ell, \ell +\delta)$.   Friedlander  computed the spectral determinant of a generic quantum graph \cite{Fri06}.

\begin{theorem}\label{thm:Friedlander}
	Suppose $\G$ is a connected metric graph. Then
	\begin{equation}\label{eq:Friedlander}
	{\det}'(\La) = \frac{2^E \ell_{\mathrm{tot}}}{V} \frac{\prod_{e\in\E} \ell_e}{\prod_{v\in\V} d_v} {\det}'(\mR) 
	\end{equation}
	where $\mR$ is the $V\times V$ matrix defined by
	\begin{equation}\label{eq: R}
	\mR_{uv}= \begin{cases}
	\sum_{w\sim v} \ell_{(w,v)}^{-1} & \text{if } u=v\\
	-\ell_{(u,v)}^{-1} & \text{if } u\sim  v \\
	0 & \text{otherwise} 
	\end{cases} \ .
	\end{equation}
\end{theorem}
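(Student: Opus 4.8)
The plan is to compute $\det'(\La)$ directly from a zeta-regularized product by building a secular (characteristic) function whose zeros are exactly the nonzero eigenvalues $k_j^2$, and then relating its structure to the matrix $\mR$. On each edge $e=(u,v)$ the general solution of $-f_e''=k^2 f_e$ is a combination of $\cos(kx_e)$ and $\sin(kx_e)$, so $f_e$ is determined by its boundary values $f(u)$ and $f(v)$ together with $k$. Imposing continuity (automatic, since we parametrize by vertex values) and the Neumann--Kirchhoff derivative condition \eqref{eq: BCs} at each vertex produces a homogeneous linear system $\mathbf{M}(k)\,\mathbf{f}=0$ for the vector $\mathbf{f}=(f(v))_{v\in\V}$. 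A short computation shows the outgoing derivative of $f_e$ at $v$ is $k\cot(k\ell_e)f(v)-k\csc(k\ell_e)f(u)$, so the vertex-condition matrix is
\begin{equation}\label{eq:Mk}
\mathbf{M}(k)_{uv}=\begin{cases} k\sum_{w\sim v}\cot(k\ell_{(w,v)}) & u=v\\ -k\csc(k\ell_{(u,v)}) & u\sim v\\ 0 & \text{otherwise}\end{cases}\ ,
\end{equation}
and $k^2\in\sigma(\La)$ off the Dirichlet spectrum precisely when $\det\mathbf{M}(k)=0$. The plan is then to extract $\det'(\La)$ as the regularized product of these zeros, handling the zero eigenvalue and the Dirichlet eigenvalues separately.

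Next I would perform the regularization by comparing $\mathbf{M}(k)$ against its small-$k$ behaviour. As $k\to 0$ one has $k\cot(k\ell)\to \ell^{-1}$ and $k\csc(k\ell)\to\ell^{-1}$, so $\mathbf{M}(k)/k \to \mR$ entrywise, exhibiting $\mR$ as the leading term and identifying the vertex-value formulation with the matrix in \eqref{eq: R}. The eigenvalue $k=0$ (constant function) corresponds to the kernel of $\mR$, which is why $\det'(\mR)$ and not $\det(\mR)$ appears. The Dirichlet spectrum enters through the poles of $\cot$ and $\csc$ at $k\ell_e\in\pi\Z$: each such contribution must be factored out of the secular determinant before regularizing, and this is exactly what produces the edge factors. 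The cleanest route is to write $\det'(\La)=\exp(-\Zeta'(0))$ and evaluate $\Zeta'(0)$ through a contour-integral (Friedlander's) representation
\begin{equation}
\Zeta(s)=\frac{1}{2\pi i}\oint k^{-2s}\,\frac{\ud}{\ud k}\ln F(k)\,\ud k,
\end{equation}
where $F(k)$ is a suitably pole-corrected secular function, so that the $\det'(\mR)$ term emerges from the $k\to 0$ endpoint and the prefactor $2^E\ell_{\mathrm{tot}}V^{-1}\prod_e\ell_e/\prod_v d_v$ collects the normalization constants from the cosecant poles and the zero mode.

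The main obstacle is the careful bookkeeping of the analytic continuation: one must show that the Dirichlet (interval) modes contribute only to the explicit $2^E$ and $\prod_e\ell_e$ factors and do not leak into $\det'(\mR)$, and that the normalization of the $k=0$ mode supplies the $\ell_{\mathrm{tot}}/V$ factor correctly. Concretely, the delicate points are (i) choosing $F(k)$ so that its zeros coincide with $\sigma(\La)\setminus\{0\}$ with correct multiplicities including where the vertex and Dirichlet spectra overlap, and (ii) computing the finite part of $\Zeta'(0)$, which requires the asymptotics of $\ln F(k)$ both as $k\to 0$ and along the contour. Since this is precisely the computation carried out by Friedlander \cite{Fri06}, I would cite that reference for the detailed regularization and present here only the reduction to \eqref{eq:Mk}, the identification $\mathbf{M}(k)/k\to\mR$, and the extraction of the stated prefactor, verifying the result against the equilateral case of proposition \ref{theorem: spectral determinant} as a consistency check: setting all $\ell_e=\ell$ reduces $\mR$ to $\ell^{-1}\De\,\mathbf{D}$ up to the diagonal normalization, and one recovers ${\det}'(\widetilde{\La})=2^{E-1}\ell^{\beta+1}{\det}'(\De)$.
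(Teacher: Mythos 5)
The paper offers no proof of this theorem: it is imported directly from Friedlander \cite{Fri06}, followed only by the equilateral-limit consistency check that you also propose, and since your argument likewise defers all of the substantive work (the choice of the pole-corrected secular function, the handling of the Dirichlet spectrum, and the evaluation of $\Zeta'(0)$) to \cite{Fri06}, the two are essentially the same proof-by-citation. One slip in your sketch worth correcting: since $k\cot(k\ell_e)\to\ell_e^{-1}$ and $k\csc(k\ell_e)\to\ell_e^{-1}$, it is $\mathbf{M}(k)$ itself, not $\mathbf{M}(k)/k$, that converges entrywise to $\mR$ as $k\to 0$ (your $\mathbf{M}(k)$ already carries the factor of $k$), and in the equilateral case $\mR$ reduces to $\ell^{-1}\cL=\ell^{-1}\mathbf{D}\De$ rather than $\ell^{-1}\De\,\mathbf{D}$ --- neither point affects the conclusion.
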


First, we note that the proof of lemma \ref{thm:discretelemma} also showed
\begin{equation}
{\det}'(\cL) =\frac{V\prod_{v\in\V} d_v}{2E} {\det}'(\De) \ .
\end{equation}
In the case of an equilateral graph with edge length $\ell$, we will define $\widetilde{\mR} = \ell^{-1}\cL$, which agrees with \eqref{eq: R} when $\ell_e = \ell$ for every edge $e$.  In fact, setting the edge lengths equal in theorem \ref{thm:Friedlander}, 
\begin{align}
{\det}' (\La) \rvert_{l_e=\ell} & = \frac{2^E E \ell^{E-V+2}}{V \prod_{v\in\V} d_v} {\det}'(\cL) \\
&= 2^{E-1} \ell^{\beta+1} {\det}'(\De) \\
&= {\det}' ( \widetilde{\La})\ ,
\end{align}
where we used proposition \ref{theorem: spectral determinant} for the last step.
This demonstrates that evaluating (\ref{eq:Friedlander}) with equal edge lengths produces the spectral determinant of the equilateral graph as expected.  


\subsection{Bound on the spectral norm of $\widetilde{R}-R$}\label{sec:norm}
Let $\ell_e=\ell+\delta_e$ where $\delta_e\in [0,\delta) $.  Then $ \lvert\ell^{-1}- \ell_e^{-1}  \rvert < \delta \ell^{-2} $.  
To bound the spectral norm $\lvert \lvert \widetilde{\mR}-\mR \rvert \rvert_2$, we use the fact that $\lvert \lvert \mathbf{A}\rvert \rvert_2\leq \sqrt{\sum_{ij} \lvert \mathbf{A}_{ij} \rvert^2}$ for a matrix $\mathbf{A}$.  From \eqref{eq: R},

\begin{equation}\label{eq:Friedlander difference}
\widetilde{\mR}_{uv}-\mR_{uv}= \begin{cases}
\sum_{r\sim u} \frac{\delta_{(r,u)}}{(\ell+\delta_{(r,u)})\ell} & \text{if } u=v\\
-\frac{\delta_{(u,v)}}{(\ell+\delta_{(u,v)})\ell} & \text{if } u\sim  v \\
0 & \text{otherwise} 
\end{cases} \ .
\end{equation}
Hence by Bergstr\"om's inequality, 
\begin{align}
\sum_v \lvert \widetilde{\mR}_{uv}-\mR_{uv} \rvert^2 & = \left( \sum_{r\sim u} \frac{\delta_{(r,u)}}{(\ell+\delta_{(r,u)})\ell} \right)^2+ \sum_{u\sim v} \left( \frac{\delta_{(u,v)}}{(\ell+\delta_{(u,v)})\ell}  \right)^2 \\
&< (d_u+1) \sum_{u\sim v} \delta^2 \ell^{-4} \ .
\end{align}
From this we see,
\begin{equation}
\lvert \lvert \widetilde{\mR} -\mR \rvert \rvert_2 < \frac{\delta \sqrt{2E (d_{\textrm{max}}+1)}}{\ell^2}< \frac{\delta \sqrt{2EV}}{\ell^2}\ ,
\end{equation}
where $d_{\textrm{max}}$ is the maximum degree of any vertex.
 
Consequently, if the eigenvalues of $\mR$ are $0=\lambda_1<\lambda_2<\dots<\lambda_{V}$ and the eigenvalues of $\widetilde{\mR}$ are
 $0=\tilde{\lambda}_1<\tilde{\lambda}_2<\dots<\tilde{\lambda}_{V}$ then,
 \begin{equation}\label{eq: normbound}
\lvert \tilde{\lambda}_j - \lambda_j \rvert < \lvert \lvert \widetilde{\mR}-\mR \rvert \rvert_2 < \frac{\delta\sqrt{2EV} }{\ell^2} \ .
 \end{equation}

\subsection{A bound on the change in a spectral determinant}\label{sec:det}

Given a set of real numbers $0<a<\alpha_1\leq \alpha_2 \leq \dots \leq \alpha_n$,
\begin{align}
&\prod_{j=1}^n (\alpha_j+a)-\prod_{j=1}^n \alpha_j\nonumber\\
&= a^n +a^{n-1}\sum_{i=1}^n \alpha_i+a^{n-2}\sum_{i_1\neq i_2} \alpha_{i_1}\alpha_{i_2}+\dots+a\sum_{\substack{i_1,\dots, i_{n-1}:\\ i_j\neq i_k}} \alpha_{i_1}\dots \alpha_{i_{V-1}}\\
&< a \left( \prod_{i=2}^n \alpha_i \right)  \left(\sum_{j=0}^{n-1} {n\choose j} \right)\\ & < a2^n\left( \prod_{i=2}^n \alpha_i \right)\label{eq: detbound} \ .
\end{align}

Now we compare the spectral determinants of $\mR$ and $\widetilde{\mR}$. From \eqref{eq: normbound}, we know that the difference between the corresponding eigenvalues of $\mR$ and $\widetilde{\mR}$ is at most
\begin{equation}
a = \frac{\delta\sqrt{2EV}}{\ell^2}.
\end{equation}
In our situation, we have $0={\lambda}_1<a<{\lambda}_2 \leq {\lambda}_3 \leq \ldots \leq {\lambda}_V$ (assuming that $\delta$ is small enough so that $a < {\lambda}_2$). Then from  \eqref{eq: detbound},
\begin{equation}\label{eq: R-r}
\lvert {\det}'(\mR) - {\det}'(\widetilde{\mR})\rvert < \frac{\delta2^{V-1}\sqrt{2EV}}{\ell^2 {\lambda}_2}{\det}'(\mR) \ .
\end{equation}

\section{Spanning trees of generic quantum graphs}\label{sec:general}

For a generic quantum graph $\G$ with edge lengths in $[\ell,\ell+\delta)$, let  
\begin{equation}\label{eq: T}
T_{\G}=  \dfrac{\prod_{v\in\V} d_v}{E \, 2^E \,\ell^{\beta+1}} {\det}'(\La) \ .
\end{equation}
Notice that, by theorem \ref{thm:equilateral spanning trees}, $T_{\widetilde{\G}}$ is the number of spanning trees of an equilateral graph.   For $\delta$ sufficiently small, the value of $T_{\G}$ will be close enough to $T_{\widetilde{\G}}$ to determine the number of spanning trees of a generic quantum graph. 
Using theorem \ref{thm:Friedlander},
\begin{equation}\label{eq:T with R}
T_{\G} = \frac{\ell_{\mathrm{tot}} \prod_{e\in\E}\ell_e}{EV\ell^{\beta+1}}{\det}'(\mR) \ ,
\end{equation}
and similarly,
\begin{equation}\label{eq:Te}
T_{\widetilde{\G}} = \frac{\ell^{E-\beta}}{V}{\det}'(\widetilde{\mR}) \ .
\end{equation}
 We will determine a bound on $\delta$ such that  $\vert T_{\G} - T_{\widetilde{\G}} \rvert <1/2$, and hence the number of spanning trees is the closest integer to $T_{\G}$. To this end, we employ two bounds on the spectrum of $\cL$.  
The first is a lower bound on the second smallest eigenvalue in terms of the graph diameter $D$, the maximum distance between a pair of vertices, due to McKay \cite{Moh91}.
\begin{theorem}\label{thm:lower bound Moh91}
The second smallest eigenvalue of $\cL$ is bounded below, $\mu_2\geq \dfrac{4}{DV}$.
\end{theorem}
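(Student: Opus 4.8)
\section{Proof proposal}

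The plan is to prove the bound by applying the Courant--Fischer variational characterisation to the eigenvector of $\cL$ associated with $\mu_2$ and then estimating the two parts of the resulting Rayleigh quotient separately, using the graph diameter. Let $f$ be a real eigenvector with $\cL f = \mu_2 f$. Since $\mathbf{1}$ is the eigenvector for the simple eigenvalue $\mu_1 = 0$ of a connected graph, I may take $f \perp \mathbf{1}$, i.e.\ $\sum_{v\in\V} f(v) = 0$. A direct computation from $\cL = \mathbf{D}-\mathbf{A}$ rewrites the Dirichlet form as a sum over edges, $\langle f, \cL f\rangle = \sum_{(u,v)\in\E}\bigl(f(u)-f(v)\bigr)^2$, so that $\mu_2 = \langle f,\cL f\rangle / \langle f,f\rangle$ is a ratio I can bound below by bounding the numerator below and the denominator above.

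For the numerator, I would let $p$ and $q$ be vertices at which $f$ attains its maximum $M$ and minimum $m$. Because $\sum_v f(v)=0$ and $f$ is nonzero, necessarily $M>0>m$, so $f(p)-f(q)=M-m>0$. Choosing a shortest path $p=v_0, v_1, \ldots, v_k = q$ with $k\le D$ and telescoping, $f(p)-f(q)=\sum_{i=1}^{k}\bigl(f(v_{i-1})-f(v_i)\bigr)$, the Cauchy--Schwarz inequality gives $(M-m)^2 \le k\sum_{i=1}^{k}\bigl(f(v_{i-1})-f(v_i)\bigr)^2$. Since each path edge $(v_{i-1},v_i)$ is an edge of $G$, the sum on the right is dominated by the full edge sum, yielding $(M-m)^2 \le D\,\langle f,\cL f\rangle$.

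The delicate step --- the one producing the factor of $4$ rather than a bare $1/(DV)$ --- is the upper bound on the denominator. Here I would maximise $\sum_v f(v)^2$ over all value assignments in $[m,M]$ subject to the single constraint $\sum_v f(v)=0$; the extremum places every coordinate at $m$ or $M$, giving $\sum_v f(v)^2 \le VM|m|$, and the estimate $M|m|\le (M-m)^2/4$ (AM--GM, using $|m|=-m$) then yields $\sum_v f(v)^2 \le \tfrac{V}{4}(M-m)^2$. Combining the two bounds,
\[
\mu_2 = \frac{\langle f,\cL f\rangle}{\sum_v f(v)^2} \ \ge\ \frac{(M-m)^2/D}{\tfrac{V}{4}(M-m)^2} = \frac{4}{DV}.
\]
I expect the main obstacle to be precisely this denominator estimate: the naive bound $\sum_v f(v)^2 \le VM^2$ only delivers $\mu_2 \ge 1/(DV)$, so recovering the sharp constant requires exploiting the mean-zero constraint through the variance/AM--GM argument above, together with the observation that $M$ and $m$ carry opposite signs.
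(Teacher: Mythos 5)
Your proof is correct, but there is nothing in the paper to compare it against: the paper states this bound without proof, quoting it (attributed to McKay) from Mohar's paper \cite{Moh91}. Your argument --- the Rayleigh quotient of the $\mu_2$-eigenvector, a telescoping/Cauchy--Schwarz estimate along a shortest path of length at most $D$ for the numerator, and the mean-zero constraint for the denominator --- is a sound, self-contained proof, and is essentially the standard argument behind the cited result. The numerator step is fine as written: a shortest path repeats no edges, so the path sum is dominated by the full edge sum, giving $(M-m)^2 \le D\,\langle f,\cL f\rangle$.

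One small repair is needed in the denominator step. You justify $\sum_v f(v)^2 \le VM|m|$ by claiming the maximiser of $\sum_v f(v)^2$ over the set $[m,M]^V \cap \{\sum_v f(v)=0\}$ puts every coordinate at $m$ or $M$. That is not quite right: the hyperplane constraint generically forces one coordinate of an extreme point of this polytope to lie strictly inside $(m,M)$ (a convex function is maximised at an extreme point, and extreme points here can have one free coordinate, since $-Vm/(M-m)$ need not be an integer). The inequality you want is nevertheless true, with a one-line coordinate-wise proof that bypasses the optimisation entirely: since $m \le f(v) \le M$ for every $v$, we have $(M-f(v))(f(v)-m)\ge 0$, i.e.\ $f(v)^2 \le (M+m)f(v) - Mm$; summing over $v$ and using $\sum_v f(v)=0$ gives $\sum_v f(v)^2 \le -VMm = VM|m|$. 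With that substitution, your chain $M|m| \le (M-m)^2/4$ and $(M-m)^2 \le D\langle f,\cL f\rangle$ closes the argument and yields $\mu_2 \ge 4/(DV)$ as claimed.
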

\noindent The second is an upper bound on the eigenvalues of $\cL$ , see \cite{Kel67, AndMor, Mer94}.
\begin{theorem}\label{thm:upper bound on spectrum of L}
	The eigenvalues of $\cL$ are bounded above, $\lambda_j\leq V$ for $j=1,\dots, V$.
\end{theorem}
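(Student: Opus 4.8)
The plan is to control the largest eigenvalue of $\cL$ through its Rayleigh quotient, exploiting that $G$, being simple, is a subgraph of the complete graph $K_V$. Since $\cL = \mathbf{D}-\mathbf{A}$ is real symmetric and positive semidefinite, all of its eigenvalues lie at or below the largest one, $\mu_V = \max_{x\neq 0}(x^\top \cL x)/(x^\top x)$, so it suffices to show $\mu_V \leq V$.

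First I would record the standard Dirichlet-form identity obtained by expanding $\cL = \mathbf{D} - \mathbf{A}$: for any $x \in \mathbb{R}^V$,
\begin{equation}
x^\top \cL x = \sum_{\{u,v\} \in \mathcal{E}} (x_u - x_v)^2 \ .
\end{equation}
The essential observation is that this form is monotone in the edge set, since each edge contributes a nonnegative term; equivalently $\cL(K_V) = \cL(G) + \cL(\bar{G})$ with $\cL(\bar{G})$ positive semidefinite. As $G$ is simple on $V$ vertices, $\mathcal{E}$ is contained in the edge set of $K_V$, whence
\begin{equation}
x^\top \cL x \leq \sum_{\{u,v\}:\, u \neq v} (x_u - x_v)^2
\end{equation}
for every $x$.

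Next I would evaluate the complete-graph form explicitly. Expanding the squares and collecting terms gives
\begin{equation}
\sum_{\{u,v\}:\, u \neq v} (x_u - x_v)^2 = V \sum_{v} x_v^2 - \Big(\sum_v x_v\Big)^2 \leq V\, x^\top x \ ,
\end{equation}
where the final inequality merely discards the nonnegative term $\big(\sum_v x_v\big)^2$. Combining the two displays and dividing by $x^\top x$ yields $\mu_V \leq V$, and hence every eigenvalue of $\cL$ is bounded above by $V$, as claimed.

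I do not anticipate a genuine obstacle here: the statement is elementary, and both the quadratic-form identity and the complete-graph computation are routine. The only mild subtlety is bookkeeping the edge sum over unordered pairs so as not to double count. An alternative, equally short route uses the complement identity $\cL(G) + \cL(\bar{G}) = V\mathbf{I} - J$, with $J$ the all-ones matrix; restricting to the orthogonal complement of the constant vector annihilates $J$, and positive semidefiniteness of $\cL(\bar{G})$ forces $\cL(G) \leq V\mathbf{I}$ there, while the constant direction contributes only the zero eigenvalue, which trivially respects the bound.
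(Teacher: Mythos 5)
Your proof is correct. Note, however, that the paper does not prove this theorem at all: it quotes the bound from the literature \cite{Kel67, AndMor, Mer94}, so a self-contained argument like yours is necessarily a different route. Both of your variants are sound: the Dirichlet-form identity $x^\top\cL x=\sum_{\{u,v\}\in\mathcal{E}}(x_u-x_v)^2$, the edge-monotonicity under the embedding of $G$ into $K_V$, and the evaluation $\sum_{\{u,v\}:\,u\neq v}(x_u-x_v)^2=V\,x^\top x-\left(\sum_v x_v\right)^2\leq V\,x^\top x$ over unordered pairs are all correct, as is the complement identity $\cL(G)+\cL(\bar G)=V\mathbf{I}-J$ together with the observation that the constant direction carries only the zero eigenvalue. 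What your argument buys is self-containedness: an elementary, few-line proof requiring no external input. What the paper's reliance on the literature buys is access to the refined Anderson--Morley estimate $\mu_V\leq\max_{(u,v)\in\E}(d_u+d_v)$, which the paper invokes separately in its discussion, equation (\ref{eq:better bound?}), to relax the edge-length constraint whenever that quantity is known to be smaller than $V$; your subgraph argument yields only the bound $V$ (indeed for $K_V$ itself the Anderson--Morley bound is the larger $2V-2$, so neither dominates), though $V$ is all that theorem \ref{thm:upper bound on spectrum of L} asserts. One cosmetic remark: the theorem as printed writes $\lambda_j$ for the eigenvalues of $\cL$, but the paper's own convention reserves $\lambda_j$ for $\De$ and uses $\mu_j$ for $\cL$; your use of $\mu_V$ for the top eigenvalue is the consistent choice.
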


\begin{proof}[Proof of Theorem \ref{thm:generic spanning trees}]
From equations (\ref{eq:T with R}) and (\ref{eq:Te}),
\begin{equation}\label{eq: inequality1}
\lvert 	T_{\G} - 	T_{ \widetilde{\G}}  \rvert
\leq \frac{\ell_{\mathrm{tot}} \prod_{e\in\E} \ell_e}{EV\ell^{\beta+1}} \lvert {\det}'(\mR) - {\det}'(\widetilde{\mR}) \rvert 
+ \frac{\lvert {{\det}' (\widetilde{\mR})} \rvert}{V}   \left\lvert \frac{\ell_{\mathrm{tot}} \prod_{e\in\E} \ell_e}{E\ell^{\beta+1}} - \ell^{E-\beta}  \right\rvert. 
\end{equation}
Using \eqref{eq: R-r} and assuming that $\delta< \ell$,
\begin{equation}
\lvert 	T_{\G} - 	T_{ \widetilde{\G}}  \rvert < \frac{(2\ell)^{E+1}}{V\ell^{\beta+1}} \frac{2^{V-1}\sqrt{2EV} \delta \lvert {\det}' (\widetilde{\mR}) \rvert }{\ell^2\tilde{\lambda}_2} 
+   \frac{\lvert {{\det}' (\widetilde{\mR})} \rvert}{V\ell^{\beta+1}}   \left\lvert (\ell+\delta)^{E+1} - \ell^{E+1}  \right\rvert.
\end{equation}
Using \eqref{eq: detbound}, we can see that
\begin{equation}\label{eq: ldifference}
\lvert (\ell+\delta)^{E+1} - \ell^{E+1}\rvert < \delta2^{E+1}\ell^E \ .
\end{equation}
Consequently,
\begin{equation}
\lvert 	T_{\G} - 	T_{ \widetilde{\G}}  \rvert
  < \delta \lvert {\det}'(\cL) \rvert \frac{2^{E+1}}{\ell V}
\left[ \frac{2^{V-1}\sqrt{2EV}  }{\ell \tilde{\lambda}_2} + 1 \right]
\end{equation}
since ${\det}'(\widetilde{\mR}) = \ell^{-(V-1)}{\det}'(\cL)$.

 We can conclude from theorem \ref{thm:lower bound Moh91} that $\tilde{\lambda}_2 > 4/V^2\ell$, and we know by applying theorem \ref{thm:upper bound on spectrum of L} that ${\det}'(\cL) \leq V^{V-1}$. Using these inequalities we see that
\begin{equation}
\lvert 	T_{\G} - 	T_{ \widetilde{\G}}  \rvert
< \frac{\delta}{\ell} V^{V-2}\, 2^{E+1}
\left[V^2 2^{V-3}\sqrt{2EV}  + 1 \right]<\frac{\delta}{\ell} V^V \, 2^{E+V-1}\sqrt{2EV} ,
\end{equation}
which establishes theorem \ref{thm:generic spanning trees}.
\end{proof}

\subsection{Star graph example}
\label{sec: star}

For comparison, we write an equivalent bound in the case of the star graph where there are explicit formulae for the spectral determinants of the equilateral and generic quantum graphs.  From \cite{HarKir11},
\begin{equation}\label{eq:generic star det}
{\det}'(\La)=\frac{2^E}{E} \sum_{e\in\E} \ell_e \ ,
\end{equation}
which agrees with the spectral determinant of the equilateral graph, corollary \ref{cor: complete bipartitie}.  For $\ell_e \in [\ell,\ell +\delta)$,
\begin{equation}
\lvert {\det}'( \widetilde{\La})-{\det}'(\La)  \rvert < 2^E\delta \ .
\end{equation}
Hence,
\begin{equation}
\lvert T_{\G}-T_{\widetilde{\G}}\rvert < \frac{\delta}{\ell} \ .
\end{equation}
Therefore, the closest integer to $T_{\G}$ is the number of spanning trees if $\delta \leq \ell/2$.

If we have a star graph, then $E=V-1$ and the condition on $\delta$ from theorem \ref{thm:generic spanning trees} is
\begin{equation}
\delta< \frac{\ell}{V^V \, 2^{2V-1}\sqrt{2V(V-1)}} \ .
\end{equation}
Clearly, the demand on the edge lengths in theorem \ref{thm:generic spanning trees} is suboptimal so, in fact, one may expect that the nearest integer to $T_{\G}$ gives the number of spanning trees even when the edge lengths are less tightly constrained.

\section{Discussion}\label{sec:conclusion}
In this paper, we proved an analog of Kirchhoff's matrix tree theorem for quantum graphs. In particular, we determined the number of spanning trees of an equilateral quantum graph from its spectral determinant. To do this we related the spectral determinant of an equilateral quantum graph to the spectral determinant of the harmonic Laplacian of the corresponding discrete graph. We extended this to non-equilateral  quantum graphs where the edge lengths are sufficiently constrained.  

The bound on the permitted variance in the edge lengths in theorem \ref{thm:generic spanning trees} is suboptimal but requires minimal information on the structure of the graph.   
However, the constraint may be loosened in some situations, even if there is no additional information about the graph's structure.
For example, all eigenvalues of $\cL$ satisfy \cite{AndMor},
\begin{equation}\label{eq:better bound?}
\lambda_j \leq \mbox{max}_{(u,v)\in \E}(d_u+d_v)\ .
\end{equation}
As we assume in theorem \ref{thm:generic spanning trees} that we know the product of the vertex degrees, if we also know that $\mbox{max}_{(u,v)\in \E}(d_u+d_v)<V$, then we can use (\ref{eq:better bound?}) in place of theorem \ref{thm:upper bound on spectrum of L} which weakens the constraint on the spread of the edge lengths.

\backmatter

\bmhead{Acknowledgments}

The authors would like to thank Gregory Berkolaiko for helpful comments.
This work was partially supported by a grant from the Simons Foundation (354583 to Jonathan Harrison).

\bibliography{sn-bibliography}

\end{document}